\theoremstyle{plain}
\newtheorem{thm}{Theorem}[section]
\newtheorem{prop}[thm]{Proposition}
\newtheorem{lemma}[thm]{Lemma}
\theoremstyle{remark}
\theoremstyle{definition}
\newtheorem{rem}[thm]{Remark}
\newtheorem{rems}[thm]{Remarks}
\newtheorem{remdef}[thm]{Remark-Definition}
\newtheorem{remsdefs}[thm]{Remarks-Definitions}
\newtheorem{defi}[thm]{Definition}
\newtheorem{notas}[thm]{Notations}
\newtheorem{example}[thm]{Example}
\newtheorem{nota}[thm]{Notation}
\title[Elliptic isometries]{Elliptic isometries of the manifold of definite positive real matrices with the trace metric}
\author[A. Dolcetti]{Alberto Dolcetti}
\author[D. Pertici]{Donato Pertici}
\begin{document}

\parindent 0pt
\selectlanguage{english}

\maketitle

\vspace*{-0.2in}

\begin{center}
{\scriptsize Dipartimento di Matematica  e Informatica, Viale Morgagni 67/a, 50134 Firenze, ITALIA

\vspace*{0.07in}

alberto.dolcetti@unifi.it, \  http://orcid.org/0000-0001-9791-8122

\vspace*{-0.03in}

donato.pertici@unifi.it,  \   http://orcid.org/0000-0003-4667-9568}

\end{center}

\begin{abstract}
We study the differential-geometric properties of the loci of fixed points of the elliptic isometries of the manifold of definite positive real matrices with the trace metric. We also give an explicit description of such loci and in particular we find their De Rham decomposition.
\end{abstract}

{\small \tableofcontents}

{\small {\scshape{Keywords.}} Positive definite matrices, trace metric, Hadamard manifolds, (elliptic) isometries, (irreducible) symmetric Riemannian spaces, De Rham decomposition.}

{\small {\scshape{Mathematics~Subject~Classification~(2010):}}  15B48, 53C35.}

{\small {\scshape{Funding:}} this research has been partially supported by GNSAGA-INdAM (Italy).}

\section*{Introduction}
The Riemannian manifold $(\mathcal{P}_n, g)$ of symmetric positive definite real matrices endowed with the trace metric has been object of interest in many frameworks, for instance in theory of metric spaces of non-positive curvature, in theory of diffusion tensor imaging, in geometry of manifold of probability distributions and more generally in matrix information geometry (see for instance  \cite{Savage1982}, \cite{Skovgaard1984}, \cite{BridHaef1999}, \cite{LaLi2001}, \cite{BhaHol2006}, \cite{Bhatia2007}, \cite{MoZ2011}, \cite{NieBha2013}, \cite{Amari2016}, \cite{BarbNie2017}).
We begun the study of the trace metric on the manifold of non-singular real matrices in \cite{DoPe2015}, next we considered the same metric on the manifolds of orthogonal real matrices and of non-singular symmetric real matrices respectively in \cite{DoPe2018} and in \cite{DoPe2019}.

In the present paper we focus our attention on the elliptic isometries (i.e. the isometries having fixed points) of $(\mathcal{P}_n, g)$ and we study their loci of fixed points, providing explicit descriptions of them.

A first general result can be obtained as consequence of ordinary (but not trivial) facts of Riemannian geometry and without regarding any explicit description of such fixed loci;  precisely (Theorem \ref{Fix-Riem-manifold}):

if $\Phi$ is an elliptic isometry of $(\mathcal{P}_n,g)$, then $(Fix(\Phi), g)$ is a closed totally geodesic simply connected symmetric Riemannian submanifold of $(\mathcal{P}_n, g)$ and so $(Fix(\Phi), g)$ is a symmetric Hadamard manifold. 

An explicit description of $(Fix(\Phi), g)$ needs more careful studies of the different types of elliptic isometries.
In \cite{DoPe2019} we have already determined and described geometrically the full group of isometries of $(\mathcal{P}_n, g)$ (see also Proposition \ref{DP-torino} and Remark \ref{int-geom-isom}): 

there are four types of elliptic isometries of $(\mathcal{P}_n, g)$, consisting in

- $\Gamma_M: X \mapsto MXM^T$, the \emph{congruence} by an arbitrary non-singular real matrix $M$ (such isometries form a group acting transitively on $(\mathcal{P}_n, g)$);

- $\Gamma_M \circ \delta$ with $M \in GL_n$ and where $\delta: X \mapsto det(X)^{-2/n} X$ can be interpreted as the orthogonal symmetry with respect to the totally geodesic hypersurface $SL\mathcal{P}_n$ of matrices in $\mathcal{P}_n$ with determinant $1$;

- $\Gamma_M \circ j$ with $M \in GL_n$ and where $ j: X \mapsto X^{-1}$ can be interpreted as the central symmetry with respect to $I_n$;

- $\Gamma_M \circ j \circ \delta$ with $M \in GL_n$ and where  $j \circ \delta$ can be interpreted as the orthogonal symmetry with respect to the geodesic 
$\mathcal{R}=\{ t I_n : t \in \mathbb{R}, t>0\}$ (i.e. the geodesic through $I_n$ and orthogonal to $SL\mathcal{P}_n$).

In particular: $Fix(j) = \{I_n\}$, $Fix(\delta)= SL\mathcal{P}_n$ and $Fix(j \circ \delta) = \mathcal{R}$. 

We describe the loci of fixed points of all elliptic isometries and, as consequence, we are able to list the De Rham decompositions and the De Rham factors of all fixed loci.

This paper is organized following the different types of elliptic isometries $\Phi$: \S 3 is devoted to $\Gamma_M$, \S 4 to $\Gamma_M \circ \delta$, \S 5 to $\Gamma_M \circ j$ and \S 6 to $\Gamma_M \circ j \circ \delta$.
 
The explicit descriptions of $(Fix(\Phi), g)$ are obtained in Propositions \ref{decomp-Gamma}, \ref{decomp-Gamma-chi}, \ref{Gamma-j-Riem-prod} and \ref{Fix-Gamma-j-chi} respectively, while the complete lists of the DeRham factors are in Propositions \ref{DeRham-Gamma}, \ref{DeRham-Gamma-delta}, \ref{DeRham-Gamma-j} and \ref{DeRham-Gamma-j-delta} respectively.

Our methods involve the theory of matrices and the actions of suitable classical Lie groups.
In \S 1 we resume some facts on matrices. In particular we point out two particular canonical forms for matrices which are similar to a multiple of an orthogonal matrix: the real Jordan standard form and the real Jordan auxiliary form (see Remarks-Definitions \ref{Jordan-form} and \ref{J-tilda}); the reason is that the fixed loci are related to certain closed Lie subgroups of $GL_n$, consisting in matrices commuting with the real Jordan standard form or fixing by congruence the real Jordan auxiliary form of suitable matrices.
Some relevant properties of $(\mathcal{P}_n, g)$ and of its totally geodesic submanifolds are resumed in \S 2; these, together with some ordinary facts of Riemannian geometry, allow to obtain the general result (Theorem \ref{Fix-Riem-manifold}), quoted above.

\section{Notations and recalls on matrices}

\begin{notas}\label{notazioni}\ \\

$I_n$: the identity matrix of order $n$; 

$A^T$: the \emph{transpose} of any matrix $A$;

$M_n$ (and $Sym_n$): the vector space of the real square matrices of order $n$ (which are symmetric);

$GL_n$ (and $SL_n$): the multiplicative group of the non-singular real matrices of order $n$ (and with determinant $1$);

$\mathcal{P}_n$ (and $SL\mathcal{P}_n$): the manifold of \emph{symmetric positive definite} matrices of order $n$  (and with determinant $1$);

$\mathcal{O}_n$ (and $S\mathcal{O}_n$): the multiplicative group of real \emph{orthogonal} matrices of order $n$ (with determinant $1$);

$\mathcal{O}(p, n-p)$ (and $S\mathcal{O}_0(p, n-p)$): the \emph{generalized orthogonal group} of \emph{signature} $(p, n-p)$ (and its connected component of the identity);

$Sp_{2n}$: \emph{the real symplectic group} given by matrices $W \in GL_ {2n}$
such that

$W
\begin{pmatrix} 
 0 & I_n \\ 
-I_n &  0
\end{pmatrix}
W^T =
\begin{pmatrix} 
 0 & I_n \\ 
-I_n &  0
\end{pmatrix}
$;

$M_n(\mathbb{C})$ (and $Herm_n$): the vector space of the complex square matrices of order $n$ (which are \emph{hermitian}); 

$GL_n(\mathbb{C})$ (and $SL_n(\mathbb{C}))$: the multiplicative group of the non-singular complex matrices of order $n$ (with determinant $1$);

$\mathcal{H}_n$ (and $SL\mathcal{H}_n$): the real manifold of hermitian positive definite matrices of order $n$ (and with determinant $1$);

$U_n$ (and $SU_n)$: the multiplicative group of complex \emph{unitary} matrices of order $n$ (with determinant $1$);

$U(\mu, \nu)$ (and $SU(\mu, \nu)$): the \emph{generalized unitary} group of signature $(\mu, \nu)$ (with determinant $1$).

\smallskip

For every $A \in M_n(\mathbb{C})$, $tr(A)$ is its \emph{trace}, $A^*:=\overline{A\,}^T$ is its \emph{transpose conjugate}, $det(A)$ is its \emph{determinant} and, provided that $det(A) \ne 0$, $A^{-1}$ is its \emph{inverse} and we denote $A^{-T}=(A^T)^{-1}=(A^{-1})^T$. 

When $A \in \mathcal{P}_n$, $\sqrt{A}$ is its unique \emph{square root} contained in $\mathcal{P}_n$.

For every $\theta \in \mathbb{R}$, we denote

 $E_{\theta}:=\begin{pmatrix} 
 \cos \theta & - \sin \theta \\ 
\sin \theta &  \cos \theta
\end{pmatrix}$, $E:=E_{\pi/2} = 
\begin{pmatrix} 
0  & -1 \\ 
 1 &  0
\end{pmatrix}$, 
so $E_\theta = (\cos \theta)I_2 + (\sin \theta) E$.

If $B_1, \cdots , B_m$ are square matrices (of possible distinct orders), $B_1 \oplus \cdots \oplus B_m$ is the block diagonal square matrix with $B_1, \cdots , B_m$ on its diagonal and, for every square matrix $B$, $B^{\oplus m}$ denotes $B \oplus \dots \oplus B$ ($m$ times). The notations $(\pm I_0) \oplus B$ and $B \oplus (\pm I_0)$ simply indicate the matrix $B$.

If $\mathcal{S}_1, \dots , \mathcal{S}_m$ are sets of square matrices, then  $\mathcal{S}_1 \oplus \dots \oplus \mathcal{S}_m$ denotes the set of all matrices $B_1 \oplus \cdots \oplus B_m$ with $B_j \in \mathcal{S}_j$ for every $j$.

\smallskip

For every matrix $X \in GL_n$ we denote
\begin{center}
$\mathcal{C}_X := \{B \in GL_n : B X = X  B\}$ \ \ and \  \ 
$\mathcal{K}_X := \{ K \in GL_n : KXK^T=X\}$.
\end{center}
It is easy to check that both $\mathcal{C}_X$ and $\mathcal{K}_X$ are closed Lie subgroups of $GL_n$.

\smallskip

For any other notation and for information on the matrices, not explicitly recalled here, we refer to \cite{HoJ2013}.
\end{notas}

\begin{defi}
For every matrix $C \in GL_n(\mathbb{C})$ we denote by $\Gamma_C$, by $j$ and by $\delta$ the maps: $GL_n(\mathbb{C}) \to GL_n(\mathbb{C})$ given by 

$\Gamma_C(X) := CXC^T$ (the \emph{congruence} by $C$), 

$j(X) := X^{-1}$ and 

$\delta(X):=|det(X)|^{-2/n} X$.

The restrictions of these maps to any subset of $GL_n(\mathbb{C})$ will be still denoted by the same letters.
\end{defi}

\begin{remsdefs}
a) Two matrices $A, B \in M_n(\mathbb{C})$ are \emph{similar} if there exists a matrix $C \in GL_n(\mathbb{C})$ such that $A=CBC^{-1}$.  

When $A, B$ are real, it does not matter if $C$ is real or complex. Indeed, even if $C$ is complex, then we can find a real matrix $C'$ satisfying $A=C'B{C'}^{-1}$.

b) Two matrices $A, B \in M_n(\mathbb{C})$ are $\mathbb{K}$-\emph{congruent} with $\mathbb{K} = \mathbb{R}$ or $\mathbb{K} = \mathbb{C}$, if there is a non-singular matrix $C$, with entries in $\mathbb{K}$, such that $A=CBC^T$. 

Two real matrices, e.g. $I_2$ and $\begin{pmatrix} 
1  & 0 \\ 
0 &  -1
\end{pmatrix}$,
can be $\mathbb{C}$-\emph{congruent}, but not $\mathbb{R}$-\emph{congruent}.

c) It is known that two matrices $A, B \in GL_n(\mathbb{C})$ are $\mathbb{C}$-congruent if and only if $AA^{-T}$ and $BB^{-T} $ are similar (see for instance in \cite[Thm.\,4.5.27 p.\,295]{HoJ2013}, via the fact that $MM^{-T}$ and $M^{-T}M$ are similar for every $M \in GL_n(\mathbb{C}))$.

Assume furthermore that $A, B$ are real; if they are $\mathbb{R}$-congruent, then  $AA^{-T}$ and $BB^{-T}$ are similar too, but the converse is not generally true.

d) Finally we recall that a real matrix $A \in M_n$ is said to be \emph{normal}, if $AA^T=A^TA$.
\end{remsdefs}

\begin{thm}\label{polar-dec} (Polar decomposition, see \cite[Thm.\,7.3.1 p.\,449]{HoJ2013})

Let $A \in GL_n$. Then there exist, and are uniquely determined, $Q, Q' \in \mathcal{P}_n$ and $U, U' \in \mathcal{O}_n$ such that $A = QU = U'Q'$. 

Moreover $U=U'$, $Q= \sqrt{AA^T}$ and  $Q'= \sqrt{A^TA}$.

In particular $A \in GL_n$ is real normal if and only if $Q=Q'$, i.e. if and only if $Q$ and $U$ commute.
\end{thm}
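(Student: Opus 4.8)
The plan is to reduce everything to the existence and uniqueness of the positive definite symmetric square root, which is already available via the notation $\sqrt{\,\cdot\,}$. First I would observe that, since $A \in GL_n$, both $AA^T$ and $A^TA$ lie in $\mathcal{P}_n$ (for $v \ne 0$ one has $v^T AA^T v = \|A^T v\|^2 > 0$, and likewise for $A^TA$), so that $Q := \sqrt{AA^T}$ and $Q' := \sqrt{A^TA}$ are well-defined elements of $\mathcal{P}_n$, in particular invertible. Setting $U := Q^{-1}A$ and $U' := AQ'^{-1}$, a one-line computation gives $UU^T = Q^{-1}(AA^T)Q^{-1} = Q^{-1}Q^2Q^{-1} = I_n$ and $U'^TU' = Q'^{-1}(A^TA)Q'^{-1} = I_n$, so $U, U' \in \mathcal{O}_n$ and $A = QU = U'Q'$; this settles existence.

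For uniqueness, suppose $A = RV$ with $R \in \mathcal{P}_n$ and $V \in \mathcal{O}_n$. Then $AA^T = RVV^TR = R^2$, and the uniqueness of the square root in $\mathcal{P}_n$ forces $R = \sqrt{AA^T} = Q$, whence $V = Q^{-1}A = U$; the right factorization is handled symmetrically via $A^TA = Q'^2$. To obtain $U = U'$, I would rewrite $A = U'Q' = (U'Q'U'^T)U'$ and note that $U'Q'U'^T = \Gamma_{U'}(Q') \in \mathcal{P}_n$, because congruence by an orthogonal matrix preserves positive definiteness; by the uniqueness of the \emph{left} polar factorization just proved, this yields $U = U'$ (and incidentally $Q = \Gamma_{U'}(Q')$).

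Finally, for the normal case: $A$ is normal, i.e. $AA^T = A^TA$, exactly when $Q^2 = Q'^2$, which by uniqueness of the square root is equivalent to $Q = Q'$. On the other hand, from $QU = A = U'Q' = UQ'$ (using $U = U'$) one sees that $Q = Q'$ holds if and only if $QU = UQ$, i.e. $Q$ and $U$ commute. Chaining these equivalences gives the last assertion. I do not expect any genuine obstacle here: the only points requiring care are the two facts borrowed without proof — existence and uniqueness of the positive definite square root, and the invariance of $\mathcal{P}_n$ under orthogonal congruence — after which the argument is entirely a matter of direct verification.
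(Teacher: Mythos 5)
Your argument is correct and complete. Note, however, that the paper does not prove this statement at all: it is a recalled result, quoted with a citation to Horn--Johnson, so there is no in-paper proof to compare against. What you have written is the standard direct argument for the nonsingular case: existence via $Q=\sqrt{AA^T}$, $U=Q^{-1}A$ (and symmetrically $Q'=\sqrt{A^TA}$, $U'=AQ'^{-1}$), uniqueness by reducing to the uniqueness of the positive definite square root, the identity $U=U'$ obtained from rewriting $A=U'Q'=(U'Q'U'^T)U'$ and invoking the uniqueness of the left factorization, and the chain of equivalences $AA^T=A^TA \Leftrightarrow Q^2=Q'^2 \Leftrightarrow Q=Q' \Leftrightarrow QU=UQ$ (the last step using $UQ'=QU$ and the invertibility of $U$). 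All of these steps check out, and the two facts you take for granted (existence and uniqueness of the square root in $\mathcal{P}_n$, and invariance of $\mathcal{P}_n$ under orthogonal congruence) are either assumed in the paper's notation or immediate. The cited theorem in Horn--Johnson is more general (it covers singular and complex matrices and is usually proved via the singular value decomposition); your restriction to $A\in GL_n$ is exactly what makes the elementary square-root route work, and it is all the paper needs, since the same ingredients reappear in Remark \ref{dec-pol-norm}.
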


\begin{rem}\label{dec-pol-norm}
Let $A= QU=UQ$ a real normal matrix in $GL_n$ together with its polar decompositions. From $Q=UQU^T$, we get $\sqrt{Q}=U\sqrt{Q}U^T$; therefore also $\sqrt{Q}$ commutes with $U$ and $A = \sqrt{Q}\, U \sqrt{Q}$.  Hence every real normal non-singular matrix is $\mathbb{R}$-congruent to the orthogonal matrix of its polar decomposition.
\end{rem}

\begin{thm}\label{teo-spettr-reale}

For every $A \in GL_n$ the following facts are equivalent:

i) $A=\lambda P$, where $P$ is an orthogonal matrix and $\lambda \ne 0$ is a real number;

ii) there is a matrix $Q \in \mathcal{O}_n$ 
such that 

$Q^T AQ = |\lambda| \left( I_p  \oplus E_{\theta_1}^{\oplus m_1} \oplus \cdots \oplus E_{\theta_r}^{\oplus m_r}\oplus (- I_q) \right)
$, where $\lambda \ne 0$ is a real number,

with $p, q, r \ge 0$, $m_j >0$ for $1 \le j \le r$ (if $r \ge 1$), $p+q+2m_1 + \cdots + 2m_r =n$ and  $0 < \theta_1 < \theta_2 < \cdots < \theta_r < \pi$.
\end{thm}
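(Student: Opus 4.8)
The implication (ii) $\Rightarrow$ (i) is immediate: the block matrix $I_p \oplus E_{\theta_1}^{\oplus m_1} \oplus \cdots \oplus E_{\theta_r}^{\oplus m_r} \oplus (-I_q)$ is manifestly orthogonal (each $E_\theta \in S\mathcal{O}_2$, and $\pm I$ are orthogonal), so $A = |\lambda| \, Q (\text{that block matrix}) Q^T$ is $|\lambda|$ times an orthogonal matrix; thus (i) holds with this orthogonal $P$ and $\lambda = |\lambda|$. So the content is (i) $\Rightarrow$ (ii).

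For (i) $\Rightarrow$ (ii), write $A = \lambda P$ with $P \in \mathcal{O}_n$ and $\lambda \neq 0$ real. The plan is to reduce $P$ to a canonical orthogonal normal form by an orthogonal change of basis. Since $P$ is orthogonal, it is a real normal matrix, so by the real spectral theorem for normal matrices there is $Q \in \mathcal{O}_n$ such that $Q^T P Q$ is block diagonal with $1 \times 1$ blocks equal to the real eigenvalues of $P$ and $2 \times 2$ blocks corresponding to the non-real complex-conjugate pairs of eigenvalues. Because $P$ is orthogonal its eigenvalues lie on the unit circle: the real ones are $+1$ or $-1$, and each conjugate pair has the form $e^{\pm i\theta}$ with $\theta \in (0,\pi)$, whose associated real $2 \times 2$ block (after a suitable orthogonal adjustment within that eigenspace, possibly composing with $\mathrm{diag}(1,-1)$ to fix the sign of $\sin\theta$) is exactly $E_\theta$. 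Grouping the $+1$ eigenvalues together (giving $I_p$), the $-1$ eigenvalues together (giving $-I_q$), and collecting the $2 \times 2$ blocks by angle, we may order the distinct angles as $0 < \theta_1 < \cdots < \theta_r < \pi$ with multiplicities $m_1, \dots, m_r > 0$, by further permuting basis vectors (a permutation matrix is orthogonal). This yields $Q^T P Q = I_p \oplus E_{\theta_1}^{\oplus m_1} \oplus \cdots \oplus E_{\theta_r}^{\oplus m_r} \oplus (-I_q)$ with $p + q + 2m_1 + \cdots + 2m_r = n$. Multiplying by $\lambda$ and setting $|\lambda|$ in front: if $\lambda > 0$ we are done immediately; if $\lambda < 0$, absorb the sign by replacing $P$ with $-P$ (still orthogonal) at the outset, which merely swaps the roles of $I_p$ and $-I_q$ and replaces each $E_{\theta_j}$ by $E_{\theta_j + \pi}$ — then re-run the reduction with $-P$ in place of $P$ so that the scalar in front is $|\lambda|$. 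This establishes (ii).

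The only genuinely delicate point is the normalization of the $2 \times 2$ blocks: the real canonical form of a normal matrix produces, for a conjugate pair $e^{\pm i\theta}$, a block of the form $\begin{pmatrix} \cos\theta & -\sin\theta \\ \sin\theta & \cos\theta \end{pmatrix}$ or its transpose depending on the chosen ordering/orientation of the real basis of the invariant plane; one must observe that conjugating by $\mathrm{diag}(1,-1) \in \mathcal{O}_2$ interchanges these two, so we can always arrange $\sin\theta > 0$, i.e. $\theta \in (0,\pi)$, which is why the stated range of the $\theta_j$ is $(0,\pi)$ and why $+1,-1$ (the cases $\theta = 0, \pi$) are split off as the separate blocks $I_p$ and $-I_q$. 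Everything else — the spectral decomposition, the fact that orthogonal conjugation and permutation preserve orthogonality, and the bookkeeping of multiplicities — is routine.
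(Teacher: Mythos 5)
Your proposal is correct and follows essentially the same route as the paper, which simply cites the Real Spectral Theorem for real normal matrices (in the form of \cite[Cor.\,2.5.11]{HoJ2013}, ``up to an irrelevant change of sign'') applied to the orthogonal factor $P$; you merely unpack that citation, including the sign normalization of the $2\times 2$ blocks and the absorption of a negative $\lambda$ into $P$, which is exactly the ``irrelevant change of sign'' the paper alludes to.
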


\begin{proof}
This is essentially the Real Spectral Theorem for matrices which are multiple of orthogonal matrices (see for instance \cite[ Cor.\,2.5.11 p.\,136--137]{HoJ2013}, except for an irrelevant change of sign), because the matrix on the right side of (ii) is multiple of an orthogonal matrix.
\end{proof}

\begin{remdef}\label{Jordan-form}

A matrix in $GL_n$ is similar to a multiple of an orthogonal matrix if and only if it is  semisimple and its eigenvalues have constant modulus.
By Theorem \ref{teo-spettr-reale}, such a matrix, $A$, is similar to a matrix of the form 
\begin{center}
$J_A:=|\lambda| \left( I_p  \oplus E_{\theta_1}^{\oplus m_1} \oplus \cdots \oplus E_{\theta_r}^{\oplus m_r}\oplus (- I_q) \right)
$ 
\end{center}

with $\lambda \ne 0$, $p, q, r \ge 0$,  $m_j >0$ for $1 \le j \le r$ (if $r \ge 1$),
$p+q+2m_1 + \cdots + 2m_r =n$ and  $0 < \theta_1 < \theta_2 < \cdots < \theta_r < \pi$.

\smallskip

Hence, for every matrix $A \in GL_n$, similar to a multiple of an orthogonal matrix, we call such matrix $J_A$ \emph{the real Jordan standard form} (shortly: RJS form) of $A$. 

We remark that the eigenvalues of $A$ (and of $J_A$) are: $|\lambda|$ with multiplicity $p$, $-|\lambda|$ with multiplicity $q$ and $|\lambda|e^{\pm i \theta_j}$ each with multiplicity $m_j$, for $j= 1, \cdots , r$ (if $r \ge 1$).

Finally, from the similarity between $A$ and $A^T$ (see for instance \cite[Thm.\,3.2.3.1, p.\,177]{HoJ2013}), we get $J_A=J_{A^T}$.
\end{remdef}

\begin{remdef}\label{J-tilda}
By technical reasons,  for every matrix $A \in GL_n$, similar to a multiple of an orthogonal matrix, we are interested in introducing another Jordan-type form, $\widetilde{J}_A$, having  the property: $(\widetilde{J}_A)^2 = J_{A^2}$.

\smallskip

By means of congruences given by orthogonal matrices,  we can arbitrarily permute the direct addends of its RJS form $J_A$.

Moreover, for \  $\Xi := {\begin{pmatrix} 
0  & 1 \\ 
 1 &  0
\end{pmatrix}}$,
we have $\Gamma_\Xi(E_{\theta})= E_{-\theta}=- E_{\pi -\theta}$. Hence for every $\theta_j \in (\dfrac{\pi}{2}, \pi)$,
up to an orthogonal congruence,
we can replace each $E_{\theta_j}$ with  $-E_{\pi- \theta_j}$. Now we  reorder the values $\theta_i \in (0, \dfrac{\pi}{2})$ together with the new values $\pi- \theta_j\in (0, \dfrac{\pi}{2})$ following the increasing order of $\theta_i$'s and of $(\pi-\theta_j)$'s; hence, after renaming them $\phi_t$, we obtain the following matrix:
\begin{center}
$
\widetilde{J}_A:= |\lambda| \left(I_p \oplus (- I_q) \oplus E_{\phi_1}^{\oplus \mu_1} \oplus (- E_{\phi_1 }^{\oplus \nu_1}) \oplus \cdots \oplus E_{\phi_h}^{\oplus \mu_h} \oplus (- E_{\phi_h}^{\oplus \nu_h}) \oplus E_{\pi/2}^{\oplus k}\right)
$
\end{center}
where $p\ge 0$ is the multiplicity of the eigenvalue $|\lambda|$, $q\ge 0$ is the multiplicity of the eigenvalue $-|\lambda|$, $k \ge 0$ is the multiplicity of the eigenvalues $\pm i |\lambda|$ and where $h \ge 0$ and $\mu_j \ge 0$ is the multiplicity of the eigenvalues $|\lambda| e^{\pm i \phi_j}$, $\nu_j \ge 0$ is the multiplicity of the eigenvalues $|\lambda| e^{\pm i (\pi -\phi_j)}$ with $\mu_j+\nu_j \ge 1$ for every $j \le h$ and $0 < \phi_1 < \cdots < \phi_h < \dfrac{\pi}{2}$.

Note that $\widetilde{J}_A= C^T A C $ for some $C \in \mathcal{O}_n$.

Finally since $(\widetilde{J}_A)^2 = \lambda^2 \left( I_{p+q} \oplus E_{2 \phi_1}^{\oplus(\mu_1 + \nu_1)} \oplus \cdots \oplus E_{2 \phi_h}^{\oplus(\mu_h + \nu_h)} \oplus (-I_{2k}) \right)$, we get that: 
\begin{center}
$(\widetilde{J}_A)^2 = J_{A^2}$.
\end{center}
We call the matrix $\widetilde{J}_A$ \emph{the real Jordan auxiliary form} (shortly: RJA form) of $A$.

As in the case of the $RJS$ forms, we have $\widetilde{J}_A = \widetilde{J}_{A^T}$.
\end{remdef}

\section{Recalls on the trace metric and a first general result}

From now on, and for the remaining part of this paper, $n$ is a fixed integer, $n\ge 2$.

\medskip

\begin{remdef}
The $C^\infty$-tensor $\gamma$ of type $(0,2)$ on $\mathcal{H}_n$, defined by 
\begin{center}
$\gamma_A(V,W) = tr(A^{-1}VA^{-1}W)$
\end{center} 
for every $A \in \mathcal{H}_n$ and for every $V, W \in T_A \mathcal{H}_n = Herm_n$,
is called \emph{trace metric}.

For convenience, the restriction of $\gamma$ to $\mathcal{P}_n$ will be denoted by $g$ (always called trace metric), so that $(\mathcal{P}_n, g)$ is a Riemannian submanifold of $(\mathcal{H}_n, \gamma)$. 

Instead we will denote again by $g$ the restriction of $g$ to any submanifold of $\mathcal{P}_n$.

For the differential-geometric properties of $(\mathcal{P}_n, g)$ and of $(\mathcal{H}_n, \gamma)$ we refer to 
\cite{Savage1982}, \cite{Skovgaard1984}, \cite{BridHaef1999}, \cite{BhaHol2006}, \cite{Bhatia2007}, \cite{MoZ2011}, \cite{NieBha2013}, \cite{DoPe2019}.
\end{remdef}

\begin{defi}
A \emph{Hadamard manifold} is a simply connected, complete, smooth Riemannian manifold without boundary and with non-positive sectional curvature. 

An isometry of a Hadamard manifold is said to be \emph{elliptic}, if it has a fixed point.

For more information on Hadamard manifolds and on their isometries we refer for instance to \cite[Lecture\,I \S\,2 and Lecture\,II \S\,6]{BaGroSch1985} and to \cite[Ch.\,1 \S\,5 and Ch.\,2 \S\,6]{Ballmann1995}.
\end{defi}

\begin{prop}\label{DP-torino} (see \cite[\S\,3 and Thm.\,4.4]{DoPe2019})

a) $(\mathcal{P}_n, g)$ is a symmetric Hadamard manifold.

b) A mapping $\Phi: (\mathcal{P}_n, g) \to (\mathcal{P}_n, g)$ is an isometry if and only if there exists a matrix $M \in GL_n$ such that one of the following cases occurs:

$\Phi(X)= \Gamma_M(X)=MXM^T$;

$\Phi(X)= (\Gamma_M \circ \delta)(X)=\dfrac{MXM^T}{det(X)^{2/n}}$; 

$\Phi(X)= (\Gamma_M \circ j)(X)= MX^{-1}M^T$; 

$\Phi(X)= (\Gamma_M \circ j \circ \delta)(X)= det(X)^{2/n}\,MX^{-1}M^T$.
\end{prop}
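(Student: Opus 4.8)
The statement to be proved is Proposition \ref{DP-torino}, which asserts that $(\mathcal{P}_n,g)$ is a symmetric Hadamard manifold and that the isometries of $(\mathcal{P}_n,g)$ are exactly the four displayed families. Since this is explicitly attributed to \cite[\S\,3 and Thm.\,4.4]{DoPe2019}, the natural approach is to recall the proof from that reference; but let me sketch how I would argue it from scratch.

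For part (a), the plan is to exhibit $(\mathcal{P}_n,g)$ as a Riemannian symmetric space of non-compact type. First I would check that $GL_n$ acts transitively on $\mathcal{P}_n$ by congruence $\Gamma_M$ (every positive definite matrix is $MM^T$ for suitable $M$) and that each $\Gamma_M$ is an isometry of $g$ — a one-line computation using $(\Gamma_M)_*V = MVM^T$ and the cyclic invariance of the trace. Then I would verify that $j\colon X\mapsto X^{-1}$ is an isometry fixing $I_n$ with $(dj)_{I_n}=-\mathrm{id}$, so it is the geodesic symmetry at $I_n$; combined with homogeneity this makes $(\mathcal{P}_n,g)$ globally symmetric. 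Identifying $\mathcal{P}_n\cong GL_n/\mathcal{O}_n$ (stabilizer of $I_n$), with the Cartan decomposition $\mathfrak{gl}_n=\mathfrak{o}_n\oplus Sym_n$, one computes the curvature via the bracket on $Sym_n$ and finds it non-positive; simple connectedness follows since $\mathcal{P}_n\cong Sym_n$ via $\exp$ (or via $GL_n^+/\mathcal{SO}_n$ being a contractible quotient). Completeness is immediate from homogeneity. Hence it is a Hadamard manifold, and symmetric.

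For part (b), the "if" direction is routine: I would check that $\Gamma_M$ is an isometry as above, that $\delta$ and $j$ are isometries (for $\delta$, note $\delta(X)=\det(X)^{-2/n}X$ and a direct trace computation, or better: $\mathcal{P}_n$ splits isometrically as $\mathbb{R}\times SL\mathcal{P}_n$ via $X\mapsto(\tfrac1n\log\det X,\ \delta(X))$ and $\delta$ is reflection in the $SL\mathcal{P}_n$ factor; for $j$, use $(dj)_A V=-A^{-1}VA^{-1}$ and trace invariance), and that compositions of isometries are isometries. The "only if" direction is the crux. Here the standard strategy is: given any isometry $\Phi$, compose with a congruence so that WLOG $\Phi(I_n)=I_n$; then $(d\Phi)_{I_n}$ is a linear isometry of $(Sym_n, \langle\cdot,\cdot\rangle_{I_n})$ with $\langle V,W\rangle = tr(VW)$, and since an isometry of a symmetric space fixing a point is determined by its differential there, it suffices to classify which orthogonal maps of $Sym_n$ arise. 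The orthogonal group $\mathcal{O}_n$ acts on $Sym_n$ by $V\mapsto PVP^T$ and these are realized by the congruences $\Gamma_P$ fixing $I_n$; the remaining ingredient is $V\mapsto -V$ (realized by $j$) and $V\mapsto V-\tfrac{tr V}{n}I_n$ composed with $-\mathrm{id}$ on the trace-part (realized by $\delta$, respectively $j\circ\delta$). So one must show that the full isometry group of the differential at $I_n$ is generated by $\mathcal{O}_n$-conjugation together with these involutions on the $\mathbb{R}$-factor — equivalently, that $\mathrm{Isom}(\mathcal{P}_n,g)$ has at most these four components. This is where the De Rham decomposition $\mathcal{P}_n=\mathbb{R}\times SL\mathcal{P}_n$ is essential: any isometry must preserve this decomposition (the $\mathbb{R}$-factor being the unique flat de Rham factor and $SL\mathcal{P}_n$ irreducible), so it is a product of an isometry of $\mathbb{R}$ (a reflection or translation, the translation absorbed into $\Gamma_M$) and an isometry of the irreducible symmetric space $SL\mathcal{P}_n\cong SL_n/\mathcal{SO}_n$, whose isometry group is known to be $PSL_n^{\pm}$ extended by the Cartan involution $X\mapsto X^{-1}$ — giving exactly the two possibilities $\Gamma_M$ and $\Gamma_M\circ j$ on that factor.

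The main obstacle is precisely this last classification step in (b): showing no "exotic" isometries exist beyond the four listed families. The clean way is to invoke that for a symmetric space of non-compact type $G/K$ with $G$ the identity component of the isometry group, $\mathrm{Isom}(G/K)/G$ is identified with the outer automorphisms of $G$ preserving $K$ up to conjugacy; for $G = SL_n(\mathbb{R})$ (or $\mathbb{R}\times SL_n(\mathbb{R})$), the relevant outer automorphism is $g\mapsto g^{-T}$, which descends to $X\mapsto X^{-1}$ on $\mathcal{P}_n$, and on the $\mathbb{R}$-factor the only extra isometry is the reflection $t\mapsto -t$, i.e. $\delta$. Assembling the four combinations $\{\mathrm{id}, j\}\times\{\mathrm{id},\delta\}$ pre-composed with the transitive congruence action yields the four stated forms. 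Everything else — the "if" direction, homogeneity, completeness, non-positive curvature — is standard and I would dispatch it with the computations indicated above, referring to \cite{DoPe2019}, \cite{Bhatia2007}, \cite{BridHaef1999} for the details already in the literature.
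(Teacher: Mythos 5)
This proposition is not proved in the paper at all: it is recalled verbatim from \cite{DoPe2019} (with the related classification for $(\mathcal{H}_n,\gamma)$ due to Moln\'ar, \cite{Moln2015}), so there is no in-paper argument to compare yours with; what you have written is a reconstruction of the standard proof, and its overall architecture is sound. Part (a) (transitivity of the congruence action, $j$ as the geodesic symmetry at $I_n$, Cartan decomposition, non-positive curvature, completeness and simple connectedness) is routine and correct, and for part (b) your reduction — normalize $\Phi(I_n)=I_n$, use that an isometry is determined by its differential at a point, split $\mathcal{P}_n$ isometrically as $\mathbb{R}\times SL\mathcal{P}_n$, invoke uniqueness of the De Rham decomposition to force any isometry to respect the splitting, and then quote the known isometry group of the irreducible factor $SL_n/S\mathcal{O}_n$ (outer automorphism $g\mapsto g^{-T}$, i.e. $X\mapsto X^{-1}$) together with $\{\mathrm{id},\ t\mapsto -t\}$ on the flat factor — is exactly the strategy one finds in the literature, with the hard step legitimately delegated to the structure theory of symmetric spaces of non-compact type. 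Two small corrections: the isometric splitting is $X\mapsto\bigl(\tfrac{\log\det X}{\sqrt{n}},\ \det(X)^{-1/n}X\bigr)$, not $X\mapsto\bigl(\tfrac1n\log\det X,\ \delta(X)\bigr)$ — note $\delta(X)=\det(X)^{-2/n}X$ has determinant $\det(X)^{-1}$, so it does not lie in $SL\mathcal{P}_n$, and the normalization of the $\mathbb{R}$-coordinate matters if you want an isometry; and for $n=2$ the transpose-inverse automorphism of $SL_2$ is inner, so $j$ restricted to $SL\mathcal{P}_2$ is itself a congruence and the four families overlap — harmless for the ``if and only if'' statement, but your phrase ``extended by the Cartan involution'' should be qualified to $n\ge 3$.
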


\begin{rem}
It is well-known that also $(\mathcal{H}_n, \gamma)$ is a symmetric Hadamard manifold and that $(\mathcal{P}_n, g)$ is a totally geodesic Riemannian submanifold of $(\mathcal{H}_n, \gamma)$.

The description of the isometries of $\mathcal{H}_n$ endowed with a class of metrics which includes $\gamma$, is given in \cite[Thm.\,3]{Moln2015}. 
As already remarked in \cite{DoPe2019}, from the comparison  with the previous result, it follows that every isometry of $(\mathcal{P}_n, g)$ is the restriction of an isometry of $(\mathcal{H}_n, \gamma)$.
\end{rem}

\begin{defi}
Let $G$ be a closed subgroup of $GL_n$.

$G$ is said to be \emph{reductive}, if $A^T \in G$ as soon as $A \in G$. 

$G$ is said to be
 \emph{algebraic}, if there is a finite system of polynomials (in the entries of $M_n$) such that $G$ is the intersection of $GL_n$ with the set of common zeroes of this system. 
\end{defi}

\begin{prop}\label{Gruppi-alg-riduttivi}(see \cite[Thm.10.58]{BridHaef1999}) \\
Let $G$ be an reductive subgroup of $GL_n$ satisfying the following property:

(*) \ \ \ if $X \in Sym_n$ and $e^X \in G$, then $e^{s X} \in G$ for every $s \in \mathbb{R}$. 

Then

i) $(G \cap \mathcal{P}_n, g)$ is a totally geodesic submanifold of $(\mathcal{P}_n, g)$;

ii) $G \cap \mathcal{P}_n$ is the orbit of $I_n$ under the action of $G$ by congruence, so that $G \cap \mathcal{P}_n$ is diffeomorphic to $G/(G\cap O_n)$;

iii) $(G \cap \mathcal{P}_n, g)$ is a symmetric Riemannian manifold with non-positive sectional curvature.
\end{prop}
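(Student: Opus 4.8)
The plan is to use the Cartan decomposition of a reductive group together with the standard description of $(\mathcal{P}_n,g)$ as the symmetric space $GL_n/\mathcal{O}_n$. First I would recall that the reductivity of $G$ means $G$ is stable under transpose, hence stable under the Cartan involution $\vartheta(A)=A^{-T}$ of $GL_n$; its fixed-point set is $\mathcal{O}_n$, so $K:=G\cap\mathcal{O}_n$ is a maximal compact subgroup of $G$. Writing the Lie algebra of $G$ as $\mathfrak{g}=\mathfrak{k}\oplus\mathfrak{p}$, where $\mathfrak{k}=\mathfrak{g}\cap\mathfrak{so}_n$ (skew-symmetric part) and $\mathfrak{p}=\mathfrak{g}\cap Sym_n$ (symmetric part), the Cartan decomposition for the closed subgroup $G$ of $GL_n$ gives that the map $\mathfrak{p}\times K\to G$, $(X,k)\mapsto e^X k$, is a diffeomorphism. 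Condition (*) is exactly what guarantees that $e^{X}\in G$ with $X\in Sym_n$ forces the whole one-parameter group $e^{sX}$ into $G$, i.e. that $X\in\mathfrak{p}$; this is the hypothesis that makes $\exp(\mathfrak{p})$ coincide with $G\cap\mathcal{P}_n$ rather than being merely contained in it.

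Next I would prove (ii). The congruence action of $GL_n$ on $\mathcal{P}_n$, $A\cdot X=AXA^T$, is transitive with stabilizer of $I_n$ equal to $\mathcal{O}_n$. Restricting to $G$: the orbit $G\cdot I_n=\{AA^T:A\in G\}$ is contained in $G\cap\mathcal{P}_n$ because $G$ is transpose-stable. Conversely, given $P\in G\cap\mathcal{P}_n$, write $P=e^{X}$ with $X\in Sym_n$; by (*) and reductivity $e^{X/2}\in G$, and $P=e^{X/2}I_n e^{X/2}$ lies in the orbit. Hence $G\cap\mathcal{P}_n=\{AA^T:A\in G\}$ is a single orbit, and the orbit map induces the diffeomorphism $G/(G\cap\mathcal{O}_n)\cong G\cap\mathcal{P}_n$; via the Cartan decomposition this orbit is precisely $\exp(\mathfrak{p})$, an embedded closed submanifold of $\mathcal{P}_n$.

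For (i), totally geodesicity: the geodesics of $(\mathcal{P}_n,g)$ through $I_n$ are exactly the curves $t\mapsto e^{tX}$ with $X\in Sym_n$ (a standard fact about the trace metric, available from \S 2 and \cite{DoPe2019}). Since $\exp(\mathfrak{p})\subseteq G\cap\mathcal{P}_n$ contains $e^{tX}$ for every $X\in\mathfrak{p}=T_{I_n}(G\cap\mathcal{P}_n)$, the submanifold $G\cap\mathcal{P}_n$ contains every geodesic tangent to it at $I_n$; by homogeneity of $G\cap\mathcal{P}_n$ under $G$ acting by congruence (which acts by isometries of $g$) the same holds at every point, so $G\cap\mathcal{P}_n$ is totally geodesic. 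Finally (iii): a totally geodesic submanifold of a symmetric space is itself a symmetric space (the geodesic symmetry of $\mathcal{P}_n$ at a point of the submanifold preserves the submanifold, since the submanifold is a union of geodesics through that point), and a totally geodesic submanifold of a manifold of non-positive curvature has non-positive curvature by the Gauss equation. The main obstacle is the careful justification of the Cartan decomposition for the (possibly disconnected) closed reductive subgroup $G$ and the verification that hypothesis (*) yields $\exp(\mathfrak{p})=G\cap\mathcal{P}_n$ exactly; once that algebraic structure is in place, the Riemannian conclusions follow from standard symmetric-space arguments. Since the statement is quoted from \cite[Thm.\,10.58]{BridHaef1999}, I would in fact simply cite it, sketching the above as the underlying reason.
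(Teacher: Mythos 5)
Your proposal is correct and takes essentially the same route as the paper, which offers no proof of its own here but simply quotes \cite[Thm.\,10.58]{BridHaef1999}: your sketch via the polar/Cartan decomposition of a transpose-stable closed subgroup, with condition (*) used to write any $P=e^{X}\in G\cap\mathcal{P}_n$ as $e^{X/2}\bigl(e^{X/2}\bigr)^{T}$ with $e^{X/2}\in G$, is precisely the argument underlying the cited theorem. The only slightly loose point is that the polar decomposition of $G$ itself (not merely the identification $\exp(\mathfrak{p})=G\cap\mathcal{P}_n$) already requires (*), but you flag this explicitly, so nothing essential is missing.
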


\begin{rem}

Let $G$ be a closed sugbroup of $GL_n$, then $G$ satisfies the condition (*) of Proposition \ref{Gruppi-alg-riduttivi} if and only if $G_0$ (the connected component of the identity of $G$) satisfies (*).
Moreover any algebraic subgroup of $GL_n$  satisfies the condition (*) (see \cite[Lemma 10.59]{BridHaef1999}).

\end{rem}

\begin{rem}\label{rho}
The mapping 
$\rho: \mathbb{C} \to M_2$, given by $\rho(z) =  Re(z) I_2 + Im(z) E$,
 is a monomorphism of $\mathbb{R}$-algebras between $\mathbb{C}$ and $M_2$. 
Note that $\rho({\overline{z}}) = \rho(z)^T$
and that $\rho(z) \in GL_2$ as soon as $z \ne 0$. 

More generally, for any $h \ge 1$, we denote again by $\rho$ the mapping: $M_h(\mathbb{C}) \to M_{2h}$, which maps the $h \times h$ complex matrix $Z=(z_{ij})$ to the $(2h) \times (2h)$ block real matrix  $(\rho(z_{ij}))$, having $h^2$ blocks of order $2 \times 2$. 

In literature there are other ways, essentially equivalent to $\rho$, to embed $M_h(\mathbb{C})$ into $M_{2h}$ (see for instance \cite[Prop.\,2.12]{deGoss2006}). It seem to us that the mapping $\rho$, used here, is more useful for the purposes of present paper.

Standard arguments show that $tr(\rho(Z)) = 2 Re(tr(Z)))$, $det(\rho(Z)) = |det(Z)|^2$ and that $\rho$ is a monomorphism of $\mathbb{R}$-algebras, whose restriction to $GL_h(\mathbb{C})$ has image into $GL_{2h}$ and it is a monomorphism of Lie groups.

We have: $\rho(Z^*) = \rho(Z)^T$ and, so, the restriction of $\rho$ to $U_h$ is again a monomorphism of Lie groups
and $\rho (U_h)=  \rho (GL_h(\mathbb{C})) \cap S\mathcal{O}_{2h}$; analogously the restriction of $\rho$ to $Herm_h$ has image into $Sym_{2h}$ and it is  a monomorphism of $\mathbb{R}$-vector spaces. 

Finally $\rho$ maps injectively $\mathcal{H}_h$ into $\mathcal{P}_{2h}$. 
Indeed $\rho(ZZ^*)= \rho(Z) \rho(Z)^T$.

Moreover, for $A \in \mathcal{H}_h$ and $Z, W \in Herm_h$, we get: 

$g_{\rho(A)}(d\rho(Z), d\rho(W)) = tr(\rho(A)^{-1} \rho(Z) \rho(A)^{-1} \rho(W)) = tr(\rho(A^{-1}Z A^{-1}W))=$

$=2Re\, tr(A^{-1}Z A^{-1}W) = 2 Re\,\gamma_A(Z,W) = 2\gamma_A(Z,W)$.

Hence the restriction of $\rho$ from $(\mathcal{H}_{h}, 2\gamma$) into $(\mathcal{P}_{2h}, g)$ is an isometry onto its image $\rho(\mathcal{H}_{h}) =\rho(GL_h(\mathbb{C})) \cap \mathcal{P}_{2h}$.
\end{rem}

\begin{defi}
For every isometry $\Phi$ of $(\mathcal{P}_n, g)$ we denote by $Fix(\Phi)$ the set of points of $\mathcal{P}_n$ fixed by $\Phi$.
\end{defi}

\begin{thm}\label{Fix-Riem-manifold}
If $\Phi$ is an elliptic isometry of $(\mathcal{P}_n,g)$, then $(Fix(\Phi), g)$ is a closed totally geodesic simply connected symmetric Riemannian submanifold of $(\mathcal{P}_n, g)$ and so $(Fix(\Phi), g)$ is a symmetric Hadamard manifold. 
\end{thm}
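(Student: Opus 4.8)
The plan is to reduce the general statement to the known structure theory of Hadamard manifolds, specifically the fact that the fixed-point set of an isometry of a Hadamard manifold is always a closed, totally geodesic, connected submanifold, and then to upgrade this submanifold to a symmetric space by exploiting the ambient symmetric structure of $(\mathcal{P}_n,g)$.

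First I would recall the standard fact (e.g.\ from \cite[Ch.\,1 \S\,5 and Ch.\,2 \S\,6]{Ballmann1995} or \cite{BaGroSch1985}) that if $\Phi$ is an isometry of a Hadamard manifold $(\mathcal{M},g)$ with a fixed point $A_0$, then $Fix(\Phi)$ is nonempty, closed, and totally geodesic; moreover, because any two points of a Hadamard manifold are joined by a unique geodesic and that geodesic is fixed pointwise whenever its endpoints are fixed, $Fix(\Phi)$ is connected. Being a closed totally geodesic submanifold of the complete manifold $(\mathcal{P}_n,g)$, it is itself complete in the induced metric $g$, and as a totally geodesic submanifold of a nonpositively curved manifold it also has nonpositive sectional curvature (by the Gauss equation, since the second fundamental form vanishes). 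Together with connectedness, completeness, nonpositive curvature, and simple connectedness — the last following because a connected totally geodesic submanifold of a simply connected manifold of nonpositive curvature is itself simply connected (it is convex, hence contractible, by the uniqueness of geodesics) — this already gives that $(Fix(\Phi),g)$ is a Hadamard manifold.

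The remaining point, and the one I expect to be the genuine content, is that $(Fix(\Phi),g)$ is a \emph{symmetric} Riemannian manifold. Here I would use that $(\mathcal{P}_n,g)$ is symmetric: for each $A\in\mathcal{P}_n$ there is a global isometry $s_A$ of $(\mathcal{P}_n,g)$, the geodesic symmetry at $A$, which fixes $A$ and reverses geodesics through it. The key observation is that for $A\in Fix(\Phi)$ the geodesic symmetry $s_A$ commutes with $\Phi$: both are isometries fixing $A$, and their differentials at $A$ are $-\mathrm{id}_{T_A\mathcal{P}_n}$ and $d\Phi_A$ respectively, which commute, so $s_A\circ\Phi$ and $\Phi\circ s_A$ are isometries with the same $1$-jet at $A$ and hence coincide (two isometries of a connected Riemannian manifold agreeing to first order at a point are equal). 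Consequently $s_A$ preserves $Fix(\Phi)$: if $\Phi(B)=B$ then $\Phi(s_A(B))=s_A(\Phi(B))=s_A(B)$. Thus $s_A$ restricts to an isometry of the totally geodesic submanifold $(Fix(\Phi),g)$ which fixes $A$ and, since $Fix(\Phi)$ is totally geodesic, reverses the geodesics of $Fix(\Phi)$ through $A$; that is exactly the geodesic symmetry of $(Fix(\Phi),g)$ at $A$. Since $A\in Fix(\Phi)$ was arbitrary, $(Fix(\Phi),g)$ is a Riemannian symmetric space. Being also a Hadamard manifold, it is a symmetric Hadamard manifold, which is the assertion.

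The main obstacle is not any single computation but assembling the correct chain of elementary Riemannian-geometry facts — in particular the rigidity statement that an isometry is determined by its $1$-jet at a point, used twice (once for connectedness of the fixed set, once to show $s_A$ commutes with $\Phi$) — and being careful that the induced metric structures (completeness, curvature sign, simple connectedness) genuinely transfer along a closed totally geodesic submanifold. None of these steps requires the explicit matrix descriptions developed later; the argument is purely intrinsic, which is presumably why the authors can state it before analyzing the four types of elliptic isometries individually.
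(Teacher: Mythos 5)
Your proposal is correct, and like the paper's proof it stays entirely within intrinsic Riemannian geometry; the difference lies in which standard lemmas carry the load. For connectedness and simple connectedness the paper argues via cut points (points in different components of the fixed set would be mutual cut points, which cannot exist in a Hadamard manifold by Cartan--Hadamard, using \cite[Cor.\,5.2 p.\,60]{Koba1995}) and then invokes a geodesic-loop criterion (\cite[Cor.\,9.2.8]{Burago2001}) to obtain simple connectedness; you instead observe that $Fix(\Phi)$ is geodesically convex --- the unique geodesic joining two fixed points is fixed pointwise --- which yields connectedness and contractibility (hence simple connectedness) in one stroke, arguably more economically. For the symmetric-space property the paper simply cites the general fact that a complete totally geodesic submanifold of a symmetric Riemannian manifold is itself symmetric, while you reprove the relevant special case directly: the ambient geodesic symmetry $s_A$ at a fixed point $A$ commutes with $\Phi$ because $s_A\circ\Phi$ and $\Phi\circ s_A$ fix $A$ with the same differential $-d\Phi_A$ there, so $s_A$ preserves $Fix(\Phi)$ and restricts to its geodesic symmetry at $A$; this is a self-contained argument tailored to fixed-point sets, whereas the paper's appeal to the general theorem is shorter. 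Both routes rest on the same two pillars --- the rigidity of an isometry under its $1$-jet at a point, and the fact that the components of a fixed-point set are closed totally geodesic submanifolds (\cite[Thm.\,5.1 p.\,59]{Koba1995} in the paper, asserted as standard in your write-up) --- and neither requires the explicit matrix descriptions developed in the later sections.
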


\begin{proof}
Each connected component of $Fix(\Phi)$ is a closed totally 
geodesic submanifold  of $\mathcal{P}_n$ by \cite[Thm.\,5.1 p.\,59]{Koba1995}. 
By completeness of $\mathcal{P}_n$, points in different components of $Fix(\Phi)$ should be mutually \emph{cut points} (see \cite[Cor.\,5.2 p.\,60]{Koba1995} and, for more information on cut points, \cite[Ch.\,VIII \S\,7]{KoNo2}).
Now, since $\mathcal{P}_n$ is a Hadamard manifold, by Cartan-Hadamard Theorem (see for instance \cite[Thm.\,22 p.\,278]{O'Neill1983} and \cite[Lecture\,1 \S 2]{BaGroSch1985}) any two points are joined by a unique minimizing geodesic. Hence $\mathcal{P}_n$ has no cut points and therefore $Fix(\Phi)$ is connected too.
Moreover $(Fix(\Phi), g)$ is complete, its curvature is non-positive and it has no non-trivial geodesic loop, because it is closed and totally geodesic in the Hadamard manifold $(\mathcal{P}_n,g)$. Hence, by \cite[Cor.\,9.2.8]{Burago2001}, $(Fix(\Phi), g)$ is simply connected and so it is a Hadamard manifold. Finally $(Fix(\Phi), g)$ is symmetric, because it is a totally geodesic Riemannian submanifold of the symmetric Riemannian manifold $(\mathcal{P}_n,g)$.
\end{proof}

\begin{remdef}
The previous Theorem implies that, if $\Phi$ is an elliptic isometry of $(\mathcal{P}_n,g)$, then $(Fix(\Phi), g)$ has a \emph{De Rham decomposition} into a Riemannian product: one of its factors (called \emph{flat } or \emph{Euclidean factor}) may be isometric to some Euclidean space $\mathbb{R}^m$, while the other factors are irreducible symmetric Hadamard manifolds. Such decomposition is unique up to isometries and permutations of its factors (see for instance \cite[Ch.\,IV, \S\,6]{KoNo1}) and each factor is an \emph{Einstein} manifold (see for instance \cite[Note 10.83, p.\,298]{Bes1987}). The irreducible simply connected symmetric spaces are classified and the complete list is for instance in  \cite[pp.\,311--317]{Bes1987} and in \cite[pp.306--308]{BerConOl2003}.

In this paper we determine the De Rham decomposition of every such $(Fix(\Phi), g)$.
\end{remdef}

\begin{rem}\label{int-geom-isom}
In \cite[Rem.\,4.6]{DoPe2019} we described geometrically the particular isometries  $\delta$, $j$ and $j \circ \delta$ of $(\mathcal{P}_n, g)$ as follows:

- $\delta$ is the orthogonal symmetry with respect to the hypersurface $SL\mathcal{P}_n$;

- $j$ is the symmetry with respect to $I_n$;

- $j \circ \delta = \delta \circ j$ is the orthogonal symmetry with respect to the geodesic 

$\mathcal{R}=\{ t I_n : t \in \mathbb{R}, t>0\}$ (i.e. the geodesic through $I_n$ and orthogonal to $SL\mathcal{P}_n$).

In particular $Fix(j) = \{I_n\}$, $Fix(\delta)= SL\mathcal{P}_n$ and $Fix(j \circ \delta) = \mathcal{R}$.

Hence we want to describe explicitly the fixed loci, when the previous isometries are composed with congruences.
\end{rem}

\begin{rem}\label{det-P-j-chi}
Let $M \in GL_n$. Then $Fix(\Gamma_M \circ \delta)$ and $Fix(\Gamma_M \circ j)$ are both contained in
$\{ P \in \mathcal{P}_n : det(P) = |det(M)| \}$.

This follows by computing the determinants from the equalities: $\dfrac{MPM^T}{det(P)^{2/n}}=P$ and

 $MP^{-1}M^T=P$.
\end{rem}

\section{The fixed points of the isometries $\Gamma_M$}

\begin{prop}\label{class-ellitt-Gamma}
Let $M \in GL_n$ and let us consider the isometry $\Gamma_M$ of $\mathcal{P}_n$.
The following facts are equivalent:

a) $\Gamma_M$ is elliptic;

b) $M$ is similar to an orthogonal matrix; 
 
c) $M$  is semi-simple and its eigenvalues have modulus $1$.

If this is the case, the RJS form of $M$ is of the type:

 $J_M=I_p  \oplus E_{\theta_1}^{\oplus m_1} \oplus \cdots \oplus E_{\theta_r}^{\oplus m_r}\oplus (- I_q)$ 

(with $p, q, r \ge 0$, $m_j > 0$ for every possible $j$, $p+q+2m_1 + \cdots + 2m_r =n$ and $0 < \theta_1 < \theta_2 < \cdots < \theta_r < \pi$).
\end{prop}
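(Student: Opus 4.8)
The plan is to prove the cycle of implications (a) $\Rightarrow$ (b) $\Rightarrow$ (c) $\Rightarrow$ (a), and then read off the shape of $J_M$ from Theorem \ref{teo-spettr-reale} and Remark-Definition \ref{Jordan-form}.

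The implication (b) $\Leftrightarrow$ (c) is essentially already contained in Remark-Definition \ref{Jordan-form}: a matrix in $GL_n$ is similar to a multiple of an orthogonal matrix if and only if it is semisimple with eigenvalues of constant modulus, and here the modulus is prescribed to be $1$ since $M$ is similar to an \emph{orthogonal} matrix (all eigenvalues of an orthogonal matrix lie on the unit circle). So the only real work is the equivalence of (a) with the algebraic conditions. For (b) $\Rightarrow$ (a): if $M = C P C^{-1}$ with $P \in \mathcal{O}_n$, I want to exhibit a fixed point of $\Gamma_M$. The natural candidate is $P_0 := C C^T \in \mathcal{P}_n$; indeed $\Gamma_M(P_0) = M C C^T M^T = C P C^{-1} C C^T C^{-T} P^T C^T = C P P^T C^T = C C^T = P_0$, using $P P^T = I_n$. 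Hence $\Gamma_M$ is elliptic.

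For the remaining implication (a) $\Rightarrow$ (b), suppose $\Gamma_M$ has a fixed point $P \in \mathcal{P}_n$, i.e. $M P M^T = P$. Writing $P = S^2$ with $S = \sqrt{P} \in \mathcal{P}_n$, this reads $(S^{-1} M S)(S^{-1} M S)^T = (S^{-1}MS)(S M^T S^{-1}) = S^{-1} M P M^T S^{-1} = S^{-1} P S^{-1} = I_n$, so the matrix $N := S^{-1} M S$ is orthogonal; thus $M = S N S^{-1}$ is similar to an orthogonal matrix, giving (b). This is the cleanest route and avoids any appeal to the general Hadamard-manifold machinery. Once (a), (b), (c) are established, the form of $J_M$ is immediate from the RJS form of Remark-Definition \ref{Jordan-form} with $|\lambda| = 1$.

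**Main obstacle.** There is essentially no serious obstacle here; the only point requiring a little care is the symmetric-square-root manipulation in (a) $\Rightarrow$ (b) (using that $S = \sqrt{P}$ is symmetric, so $S^T = S$ and $(S^{-1}MS)^T = S M^T S^{-1}$), together with citing the correct characterization from Remark-Definition \ref{Jordan-form} for (b) $\Leftrightarrow$ (c). One should also note explicitly why the modulus is $1$ rather than an arbitrary constant: orthogonal matrices have eigenvalues on the unit circle, and similarity preserves eigenvalues, which pins down $|\lambda| = 1$ throughout.
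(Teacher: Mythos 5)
Your proposal is correct and follows essentially the same route as the paper: both reduce (b) $\Leftrightarrow$ (c) to Remark-Definition \ref{Jordan-form} and prove (a) $\Leftrightarrow$ (b) by writing a fixed point $P$ as $CC^T$ (you take the particular choice $C=\sqrt{P}$) and checking that $C^{-1}MC$ is orthogonal, and conversely that $CC^T$ is fixed when $C^{-1}MC\in\mathcal{O}_n$. The only difference is cosmetic, so there is nothing to add.
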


\begin{proof}
The equivalence between (b) and (c) and the assertion about the RJS form are in \ref{Jordan-form}.

Hence it suffices to prove the equivalence between (a) and (b). 

If $P \in \mathcal{P}_n$ verifies: $MPM^T=P$, then, for any $C \in GL_n$ satisfying $P=CC^T$, we get:
$(C^{-1}MC) (C^{-1}MC)^T =(C^{-1}MC)(C^TM^T C^{-T}) = I_n$, i.e. $C^{-1}MC \in \mathcal{O}_n$.

For the converse, let $C$ be any non-singular matrix such that $C^{-1}MC \in \mathcal{O}_n$, then: 

$(C^{-1}MC)(C^TM^T C^{-T}) = I_n$ and finally: $M(CC^T)M^T = CC^T$. This allows to conclude.
\end{proof}

\begin{prop}\label{Fix-Gamma}
Let $M \in GL_n$ and assume that the isometry $\Gamma_M$ of $\mathcal{P}_n$ is elliptic. Then

i) $Fix(\Gamma_M) =\{CC^T : C\in GL_n, C^{-1}MC \in \mathcal{O}_n\}\\
\hspace*{.7in}          = \{FF^T : F\in GL_n, F J_M F^{-1} = M\};$ 
           
ii) for any fixed matrix $F_0 \in GL_n$ such that $F_0 J_M F_0^{-1} = M$, we have:

$Fix(\Gamma_M) = \{F_0AA^TF_0^T : A \in \mathcal{C}_{J_M}\}$.

\end{prop}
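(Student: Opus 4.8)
The statement to prove is Proposition \ref{Fix-Gamma}, which has two parts: an intrinsic description of $Fix(\Gamma_M)$ as a set of matrices $CC^T$ (equivalently $FF^T$), and a parametrization via a single fixed conjugating matrix $F_0$. I would prove part (i) first, then deduce part (ii) as a corollary of (i) together with elementary group theory applied to the set of conjugators.

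\textbf{Part (i).} The first equality $Fix(\Gamma_M) = \{CC^T : C \in GL_n,\ C^{-1}MC \in \mathcal{O}_n\}$ is essentially already contained in the proof of Proposition \ref{class-ellitt-Gamma}, and I would invoke that computation: if $P \in \mathcal{P}_n$ and $P = CC^T$ (such $C$ exists since $P$ is positive definite), then $MPM^T = P$ is equivalent to $(C^{-1}MC)(C^{-1}MC)^T = I_n$, i.e. $C^{-1}MC \in \mathcal{O}_n$. Conversely any such $CC^T$ is fixed. This gives the first line. For the second equality, I need to identify $\{CC^T : C^{-1}MC \in \mathcal{O}_n\}$ with $\{FF^T : FJ_MF^{-1} = M\}$. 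The inclusion $\supseteq$ is immediate: if $FJ_MF^{-1} = M$ then, since $J_M \in \mathcal{O}_n$ by its explicit form (it is a block-diagonal matrix of $I_p$, rotation blocks $E_{\theta_j}$, and $-I_q$, all orthogonal), $F$ is a conjugator of the required type. For $\subseteq$, suppose $C^{-1}MC = O \in \mathcal{O}_n$. By Proposition \ref{class-ellitt-Gamma}, $O$ is similar to $J_M$ (both have the same RJS form, being orthogonal with the same eigenvalues), and by the Real Spectral Theorem (Theorem \ref{teo-spettr-reale}) applied to the orthogonal matrix $O$, there is $Q \in \mathcal{O}_n$ with $Q^TOQ = J_M$; hence setting $F := CQ$ we get $FJ_MF^{-1} = CQ J_M Q^T C^{-1} = C O C^{-1} = M$, and $FF^T = CQQ^TC^T = CC^T$. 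So the two sets coincide.

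\textbf{Part (ii).} Fix $F_0$ with $F_0 J_M F_0^{-1} = M$ (this exists by part (i), since $Fix(\Gamma_M) \neq \emptyset$). The key observation is that the set of all $F \in GL_n$ with $FJ_MF^{-1} = M$ is exactly the right coset $F_0 \cdot \mathcal{C}_{J_M}$: indeed $FJ_MF^{-1} = M = F_0J_MF_0^{-1}$ iff $(F_0^{-1}F)J_M = J_M(F_0^{-1}F)$ iff $F_0^{-1}F \in \mathcal{C}_{J_M}$, i.e. $F = F_0 A$ with $A \in \mathcal{C}_{J_M}$. Substituting into the description from part (i), $Fix(\Gamma_M) = \{(F_0A)(F_0A)^T : A \in \mathcal{C}_{J_M}\} = \{F_0 A A^T F_0^T : A \in \mathcal{C}_{J_M}\}$, as claimed.

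\textbf{Main obstacle.} The routine part is the coset computation in (ii); the only place requiring genuine care is the second equality in (i), specifically the passage from an arbitrary orthogonal conjugate $O = C^{-1}MC$ to the canonical form $J_M$ by a further \emph{orthogonal} change of basis. This is where one must use that $O$, being orthogonal, is itself a "multiple of an orthogonal matrix" with eigenvalues of modulus $1$, so Theorem \ref{teo-spettr-reale}(ii) applies to put it in the form $J_M$ via conjugation by an element of $\mathcal{O}_n$ — and one should double-check that the RJS form of $O$ really is $J_M$ and not some permuted variant, which holds because $M$ and $O$ are similar and the RJS form is an invariant of the similarity class (Remark-Definition \ref{Jordan-form}). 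Everything else is bookkeeping with the identity $(CD)(CD)^T = C(DD^T)C^T$ and the fact that orthogonal $Q$ satisfies $Q^{-1} = Q^T$.
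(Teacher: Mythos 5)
Your proposal is correct and follows essentially the same route as the paper: part (i) is reduced to the computation in Proposition \ref{class-ellitt-Gamma} plus the orthogonal conjugation $Q^TOQ=J_M$ with $F=CQ$, and part (ii) is the observation that the conjugators of $J_M$ to $M$ form the coset $F_0\,\mathcal{C}_{J_M}$. Your coset formulation of (ii) is merely a slightly cleaner packaging of the paper's pointwise argument, with no substantive difference.
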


\begin{proof}

The first equality of (i) has been essentially obtained in verifying the equivalence between (a) and (b) of Proposition \ref{class-ellitt-Gamma}. For the second equality of (i), it suffices to note that $J_M$ is orthogonal and that if $C^{-1}MC \in \mathcal{O}_n$ then $C^{-1}MC = QJ_MQ^T$ for some $Q \in \mathcal{O}_n$, hence $F=CQ$ satisfies $FF^T=CC^T$ and $F^{-1}MF \in \mathcal{O}_n$.

Now we prove (ii).
Let $P \in \mathcal{P}_n$ be a fixed point of $\Gamma_M$. By (i), we can choose a matrix $F_0 \in GL_n$ such that $F_0J_M F_0^{-1} =M$ and $F_0F_0^T=P$. Let $F\in GL_n$ any other matrix such that $FJ_M F^{-1} =M$ and $FF^T=P$, then an easy computation shows that the matrix $A:=F_0^{-1} F$ satisfies $AJ_M=J_MA$, i. e. $A \in \mathcal{C}_{J_M}$. Therefore $P=FF^T= (F_0A)(F_0A)^T=F_0AA^TF_0^T$ with $A=F_0^{-1} F \in \mathcal{C}_{J_M}$.

Conversely let $F = F_0 A$ with $A \in \mathcal{C}_{J_M}$. Then $F J_M F^{-1} = F_0 A J_M A^{-1} F_0^{-1} = F_0 J_M F_0^{-1} = M$. Then, by assertion (i), $FF^T=(F_0 A)(F_0 A)^T=F_0AA^TF_0^T$ is a fixed point of $\Gamma_M$ in $\mathcal{P}_n$.
\end{proof}

\begin{lemma}\label{lemma-su-J}
Let $0 <\theta_1 < \theta_2 < \cdots < \theta_r < \pi$ be real numbers
and let 

$J:=I_p \oplus E_{\theta_1}^{\oplus m_1} \cdots \oplus E_{\theta_r}^{\oplus m_r} \oplus (-I_q)$ with $p, q, r \ge 0$, $m_j > 0$ for every possible $j$, 

$p+q+ 2m_1 + \cdots +2m_r=n$.

Then the set of matrices of $M_n$, commuting with $J$, is the vector space 
\begin{center}
$M_p \oplus \rho (M_{m_1}(\mathbb{C})) \oplus \cdots \oplus\rho ( M_{m_r}(\mathbb{C})) \oplus M_q$.
\end{center}

In particular the Lie group of non-singular matrices, commuting with $J$, is 
\begin{center}
$GL_p \oplus \rho (GL_{m_1}(\mathbb{C})) \oplus \cdots \oplus \rho (GL_{m_r}(\mathbb{C})) \oplus GL_q$,
\end{center}
 which is an algebraic reductive subgroup of $GL_n$.
\end{lemma}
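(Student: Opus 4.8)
The plan is to reduce the computation of the centralizer of $J$ to that of each of its diagonal blocks, then to identify these block centralizers, and finally to intersect with $GL_n$ and check algebraicity and reductivity.

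First I would write $J = B_0 \oplus B_1 \oplus \cdots \oplus B_r \oplus B_{r+1}$ with $B_0 := I_p$, $B_j := E_{\theta_j}^{\oplus m_j}$ for $1 \le j \le r$ and $B_{r+1} := -I_q$, and observe that the spectra of these blocks, namely $\{1\}$, $\{e^{i\theta_j}, e^{-i\theta_j}\}$ and $\{-1\}$, are pairwise disjoint: this uses $0 < \theta_1 < \cdots < \theta_r < \pi$, so that the numbers $e^{\pm i\theta_j}$ are $2r$ distinct non-real complex numbers, none equal to $\pm 1$. Writing $A \in M_n$ commuting with $J$ in the induced block form $A = (A_{ab})$, the relation $AJ = JA$ reads $A_{ab}B_b = B_a A_{ab}$; when $a \ne b$ the matrices $B_a$ and $B_b$ have no common eigenvalue, so the operator $X \mapsto B_a X - X B_b$ is invertible and $A_{ab} = 0$. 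Hence $A$ is block diagonal, and it remains to determine, for each block $B_a$, the matrices of the appropriate size commuting with it.

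For the blocks $I_p$ and $-I_q$ every matrix of $M_p$, resp. $M_q$, commutes. For a block $E_{\theta_j}^{\oplus m_j} \in M_{2m_j}$, I would split a candidate matrix into $2 \times 2$ sub-blocks; it commutes with $E_{\theta_j}^{\oplus m_j}$ if and only if each such sub-block commutes with $E_{\theta_j}$. Since $\theta_j \in (0,\pi)$ gives $\sin\theta_j \ne 0$, the matrix $E = (\sin\theta_j)^{-1}\bigl(E_{\theta_j} - (\cos\theta_j)\,I_2\bigr)$ lies in the subalgebra of $M_2$ generated by $E_{\theta_j}$, so that subalgebra equals $\{aI_2 + bE : a, b \in \mathbb{R}\} = \rho(\mathbb{C})$; and because $E_{\theta_j}$ has the non-real eigenvalues $e^{\pm i\theta_j}$, its centralizer in $M_2$ is exactly this two-dimensional algebra. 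Therefore the matrices commuting with $E_{\theta_j}^{\oplus m_j}$ are precisely those all of whose $2 \times 2$ sub-blocks lie in $\rho(\mathbb{C})$, which by the very definition of $\rho$ on $M_{m_j}(\mathbb{C})$ is $\rho(M_{m_j}(\mathbb{C}))$. Assembling the blocks yields the asserted description of the centralizer of $J$ in $M_n$, which is visibly a vector space.

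Finally, a block diagonal matrix is invertible exactly when each diagonal block is, and the identity $\det(\rho(Z)) = |\det Z|^2$ (Remark \ref{rho}) shows $\rho(Z) \in GL_{2m_j}$ iff $Z \in GL_{m_j}(\mathbb{C})$; this gives the stated description of the group of non-singular matrices commuting with $J$. That this group is algebraic is immediate, since it is cut out in $GL_n$ by the linear (hence polynomial) equations $AJ - JA = 0$; that it is reductive follows because transposition preserves $M_p$ and $M_q$ and, via $\rho(Z)^T = \rho(Z^*)$ (Remark \ref{rho}) together with $Z \in GL_{m_j}(\mathbb{C}) \iff Z^* \in GL_{m_j}(\mathbb{C})$, preserves each $\rho(GL_{m_j}(\mathbb{C}))$ as well. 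The only step needing real care is the identification of the centralizer of the rotation block $E_{\theta_j}$ with $\rho(\mathbb{C})$ — in particular the fact that this centralizer does not depend on $\theta_j$ as long as $\theta_j \in (0,\pi)$ — whereas the disjoint-spectra reduction and the invertibility bookkeeping are routine.
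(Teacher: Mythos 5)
Your proof is correct. It follows the same overall strategy as the paper --- write $A$ in blocks adapted to the block-diagonal structure of $J$ and analyze the commutation relation blockwise --- but the key steps are justified differently. The paper splits $J$ into the individual $2\times 2$ rotation blocks (plus $I_p$ and $-I_q$): blocks paired with $I_p$ or $-I_q$ are killed by determinant arguments such as $\det(E_\theta - I_2)>0$, and for each pair of rotation blocks with angles $\varphi,\psi$ the condition $A_{ij}E_{\varphi}=E_{\psi}A_{ij}$ is written as an explicit homogeneous $4\times 4$ linear system whose determinant is computed and shown to vanish only when $\varphi=\psi$, in which case the solution space is $\mathrm{span}\{I_2,E\}=\rho(\mathbb{C})$. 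You instead group all rotations with the same angle into a single block $E_{\theta_j}^{\oplus m_j}$, kill every off-diagonal block in one stroke via the Sylvester/disjoint-spectra lemma (the spectra $\{1\}$, $\{e^{\pm i\theta_j}\}$, $\{-1\}$ being pairwise disjoint), and identify the centralizer of $E_{\theta_j}$ using the fact that a non-scalar $2\times 2$ matrix with distinct eigenvalues is non-derogatory, so its real centralizer is the two-dimensional algebra it generates, namely $\rho(\mathbb{C})$ since $\sin\theta_j\neq 0$. Your route trades the paper's self-contained explicit computations for two standard structural facts, which makes the argument cleaner and makes transparent why the answer is independent of the particular angles; your closing verifications (invertibility via $\det\rho(Z)=|\det Z|^2$, algebraicity from the linear equations $AJ-JA=0$, reductivity from $\rho(Z)^T=\rho(Z^*)$) also spell out details that the paper dismisses as easy arguments about non-singularity.
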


\begin{proof}
In order to simplify the next computations we denote $\sigma = m_1 + \cdots + m_r$, $F_0=I_p$, $F_{\sigma +1} =-I_q$, $F_1 = F_2 = \cdots = F_{m_1} = E_{\theta_1}, \  F_{m_1 +1} = \cdots = F_{m_1+m_2} = E_{\theta_2},\cdots   , F_{m_1 + \cdots + m_{r-1} +1} = \cdots  = F_{\sigma} = E_{\theta_r}$, so that
$J= F_0 \oplus F_1  \oplus \cdots \oplus F_\sigma \oplus F_{\sigma+1}$.

Let $A \in M_n$. We write $A$ in blocks: $A = (A_{ij})$ with $i,j = 0, \cdots , \sigma +1$, $A_{ij} \in M_2$ for ever $1 \le i, j \le \sigma$,  $A_{00} \in M_p$,  $A_{\sigma+1, \sigma +1} \in M_q$ and with the remaining matrices of obvious orders, in line with the decomposition in blocks.

The condition $AJ=JA$ is equivalent to 

(**) \ \ \ $A_{ij} F_j = F_i A_{ij}$, for every $i,j =0, \cdots , \sigma +1$.

Easy computations show directly that $A_{0, \sigma +1} = A_{\sigma +1,0}=0$ and that $A_{00}$ and $A_{\sigma +1, \sigma +1}$ are generic matrices in $M_p$ and $M_q$ respectively.

When $i \in \{0, \sigma +1\}$ and $1\le j \le \sigma$ or $j \in \{0, \sigma +1\}$  and $1\le i \le \sigma$, the condition (**) implies that $A_{ij}=0$.
Indeed, when $i=0$ and $1\le j \le \sigma$, (**) gives: $A_{0j}(F_j-I_2)=0$ and we conclude since $det(F_j-I_2)>0$. Analogously we can conclude in the other three cases.

Now, for $1 \le i, j \le \sigma$, (**) can be written as 

$A_{ij} 
 \begin{pmatrix} 
 \cos\varphi & - \sin\varphi \\ 
\sin\varphi & \cos\varphi
 \end{pmatrix}=
  \begin{pmatrix} 
 \cos\psi & -\sin\psi \\ 
\sin\psi & \cos\psi
 \end{pmatrix} A_{ij}
$, 
with $\varphi, \psi \in \{\theta_1, \cdots , \theta_r \}$.

This gives a homogeneous linear system $4 \times 4$ with unknowns the entries of the matrix $A_{ij}$, whose determinant is 

$[(\cos \psi -\cos \varphi)^2 - \sin^2 \psi + \sin^2 \varphi]^2 + 4 (\cos \psi -\cos \varphi)^2\sin^2\psi$. This expression is non-zero except for $\varphi = \psi$.

Hence, if $\varphi \ne \psi$, then $A_{ij}=0$.

If $\varphi = \psi$, then the rank of the matrix associated to the system is $2$ and, so, the space of its solutions is the $\mathbb{R}$-vector space spanned by $I_2$ and $E$, i.e. it is $\rho(\mathbb{C})$.

This concludes the first part of the statement. The second part follows easily from arguments about the non-singularity of the matrices.
\end{proof}

\begin{prop}\label{decomp-Gamma}
Let $M \in GL_n$ such that $\Gamma_M$ is elliptic with eigenvalues $1$ of multiplicity $p$, $-1$ of multiplicity $q$ , $e^{\pm \theta_1}$ both with multiplicity $m_1$, $\cdots$,  up to $e^{\pm \theta_r}$ both with multiplicity $m_r$, with $p, q, r \ge 0$, $m_j > 0$ for $1 \le j \le r$, 
$p+q+ 2m_1 + \cdots +2m_r=n$ and
$0 < \theta_1 < \cdots < \theta_r <\pi$. Fix $F_0 \in GL_n$ such that $F_0 J_M F_0^{-1} = M$.   Then
\begin{center}
$\Gamma_{F_0^{-1}}(Fix(\Gamma_M))= \mathcal{P}_p \oplus \rho (\mathcal{H}_{m_1}) \oplus \cdots \oplus \rho (\mathcal{H}_{m_r}) \oplus \mathcal{P}_q$.
\end{center}
Hence $\left( Fix(\Gamma_M), g \right)$
 is a closed simply connected totally geodesic symmetric Riemannian submanifold of $(\mathcal{P}_n, g)$ isometric to the Riemannian product 
\begin{center}
$(\mathcal{P}_p, g) \times (\mathcal{H}_{m_1}, 2\gamma) \times \cdots \times (\mathcal{H}_{m_r}, 2\gamma) \times (\mathcal{P}_q, g)$.
\end{center}
\end{prop}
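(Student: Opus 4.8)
The plan is to combine Proposition \ref{Fix-Gamma}(ii) with Lemma \ref{lemma-su-J} and then invoke the general machinery of Proposition \ref{Gruppi-alg-riduttivi} together with the isometry property of $\rho$ established in Remark \ref{rho}. First I would apply the change of coordinates $\Gamma_{F_0^{-1}}$, which is an isometry of $(\mathcal{P}_n,g)$ since $F_0 \in GL_n$; by Proposition \ref{Fix-Gamma}(ii) it carries $Fix(\Gamma_M)$ precisely onto $\{AA^T : A \in \mathcal{C}_{J_M}\}$. By Lemma \ref{lemma-su-J}, $\mathcal{C}_{J_M}$ (the group of non-singular matrices commuting with $J_M$) equals the algebraic reductive subgroup $G := GL_p \oplus \rho(GL_{m_1}(\mathbb{C})) \oplus \cdots \oplus \rho(GL_{m_r}(\mathbb{C})) \oplus GL_q$ of $GL_n$. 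Since $G$ is algebraic it satisfies condition (*), so Proposition \ref{Gruppi-alg-riduttivi}(ii) gives that $\{AA^T : A \in G\}$ is exactly the orbit of $I_n$ under congruence by $G$, which is $G \cap \mathcal{P}_n$.

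Next I would identify $G \cap \mathcal{P}_n$ explicitly. A block-diagonal matrix is positive definite iff each block is, and by Remark \ref{rho} we have $\rho(GL_{m_j}(\mathbb{C})) \cap \mathcal{P}_{2m_j} = \rho(\mathcal{H}_{m_j})$. Hence
\begin{center}
$G \cap \mathcal{P}_n = \mathcal{P}_p \oplus \rho(\mathcal{H}_{m_1}) \oplus \cdots \oplus \rho(\mathcal{H}_{m_r}) \oplus \mathcal{P}_q$,
\end{center}
which establishes the displayed equality $\Gamma_{F_0^{-1}}(Fix(\Gamma_M)) = \mathcal{P}_p \oplus \rho(\mathcal{H}_{m_1}) \oplus \cdots \oplus \rho(\mathcal{H}_{m_r}) \oplus \mathcal{P}_q$.

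For the Riemannian statement, the submanifold $G \cap \mathcal{P}_n$ is closed, totally geodesic, simply connected, symmetric and Hadamard either by Theorem \ref{Fix-Riem-manifold} (it is the fixed locus of the elliptic isometry $\Gamma_{F_0^{-1}} \circ \Gamma_M \circ \Gamma_{F_0}$, or directly of $\Gamma_M$ transported by an isometry) or by Proposition \ref{Gruppi-alg-riduttivi}(i) and (iii). It remains to compute the induced metric. The block-diagonal form of matrices in $G \cap \mathcal{P}_n$ means that the trace metric $g$ splits as an orthogonal sum over the blocks: for a block-diagonal $A = A^{(0)} \oplus \cdots \oplus A^{(r+1)}$ and tangent vectors likewise block-diagonal, $tr(A^{-1} V A^{-1} W) = \sum_k tr((A^{(k)})^{-1} V^{(k)} (A^{(k)})^{-1} W^{(k)})$, so $(G \cap \mathcal{P}_n, g)$ is the Riemannian product of the factors $(\mathcal{P}_p, g)$, $(\rho(\mathcal{H}_{m_j}), g)$ and $(\mathcal{P}_q, g)$. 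Finally, Remark \ref{rho} states precisely that $\rho$ restricts to an isometry from $(\mathcal{H}_{m_j}, 2\gamma)$ onto $(\rho(\mathcal{H}_{m_j}), g)$, which gives the asserted product $(\mathcal{P}_p, g) \times (\mathcal{H}_{m_1}, 2\gamma) \times \cdots \times (\mathcal{H}_{m_r}, 2\gamma) \times (\mathcal{P}_q, g)$.

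The only slightly delicate point is the orthogonal splitting of the trace metric along the block decomposition, but this is immediate once one notes that off-diagonal blocks of tangent vectors stay off-diagonal and do not interact with the block-diagonal base point in the trace; so in fact there is no serious obstacle — the main content has already been packaged into Proposition \ref{Fix-Gamma}, Lemma \ref{lemma-su-J}, Proposition \ref{Gruppi-alg-riduttivi} and Remark \ref{rho}, and the proof is essentially an assembly of these.
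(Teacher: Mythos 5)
Your proposal is correct and follows essentially the same route as the paper's proof: it combines Proposition \ref{Fix-Gamma}(ii) with Lemma \ref{lemma-su-J}, identifies the resulting set with $G \cap \mathcal{P}_n$ for the algebraic reductive group $G$, and concludes via Proposition \ref{Gruppi-alg-riduttivi} and the isometry property of $\rho$ from Remark \ref{rho}. The only cosmetic difference is that you obtain $\{AA^T : A \in G\} = G \cap \mathcal{P}_n$ through the orbit statement of Proposition \ref{Gruppi-alg-riduttivi}(ii), while the paper computes this set blockwise directly; both are fine.
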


\begin{proof}
From the previous Lemma \ref{lemma-su-J}, the set of matrices $AA^T$ with $A \in \mathcal{C}_{J_M}$ is: 

$\mathcal{P}_p \oplus \rho (\mathcal{H}_{m_1}) \oplus \cdots \oplus \rho (\mathcal{H}_{m_r}) \oplus \mathcal{P}_q$.
Hence, from the Proposition \ref{Fix-Gamma}, $Fix(\Gamma_M) = \\ = \Gamma_{F_0}(\mathcal{P}_p \oplus \rho (\mathcal{H}_{m_1}) \oplus \cdots \oplus \rho (\mathcal{H}_{m_r}) \oplus \mathcal{P}_q)$. Since $\Gamma_{F_0}$ is an isometry of $(\mathcal{P}_n,g)$ and by remarking that the Riemannian manifold $(\mathcal{P}_p \oplus \rho (\mathcal{H}_{m_1}) \oplus \cdots \oplus \rho (\mathcal{H}_{m_r}) \oplus \mathcal{P}_q, g)$ is canonically isometric to
$(\mathcal{P}_p, g) \times (\mathcal{H}_{m_1}, 2\gamma) \times \cdots \times (\mathcal{H}_{m_r}, 2\gamma) \times (\mathcal{P}_q, g)$, we can conclude by Proposition \ref{Gruppi-alg-riduttivi}, taking into account that 

$\mathcal{P}_p \oplus \rho (\mathcal{H}_{m_1}) \oplus \cdots \oplus \rho (\mathcal{H}_{m_r}) \oplus \mathcal{P}_q = G \cap \mathcal{P}_n$, where $G$ is the algebraic reductive subgroup of $GL_n$ defined by 
$G:=GL_p \oplus \rho (GL_{m_1}(\mathbb{C})) \oplus \cdots \oplus \rho (GL_{m_r}(\mathbb{C})) \oplus GL_q$.
\end{proof}

\begin{rem}
The values $p, q, r$ in Proposition \ref{decomp-Gamma} are non-negative and not all zero. When some of them vanishes, the Riemannian product in this Proposition must be intended in a suitable (but obvious) way. 
For instance, if $p=0$ (or $q=0$), the factor $(\mathcal{P}_p, g)$ (or $(\mathcal{P}_p, g)$) does not appear and if $r=0$, no factor $(\mathcal{H}_{m_j}, 2\gamma)$ appears.
Analogous remarks can be done about next Propositions  \ref{decomp-Gamma-chi}, \ref{Gamma-j-Riem-prod} and \ref{Fix-Gamma-j-chi}.
\end{rem}

\begin{rem}\label{DeRham-casi}
For every $m \ge 1$, it is well-known that $(\mathcal{P}_m, g)$ isometric to the Riemannian product of $(SL\mathcal{P}_m, g)$ with $\mathbb{R}$ and that $SL\mathcal{P}_m$ is diffeomorphic to $SL_m/S\mathcal{O}_m$ (see for instance \cite[p.\, 325]{BridHaef1999}). 
Hence, remembering that $SL\mathcal{P}_1$ is a point, we get that the De Rham factors of $(\mathcal{P}_m, g)$ are $SL_m/S\mathcal{O}_m$ and $\mathbb{R}$ when $m \ge 2$, while, when $m=1$, $\mathbb{R}$ is the unique factor (see for instance \cite[p.306]{BerConOl2003}).

Analogously it is easy to show that $(\mathcal{H}_{m}, 2 \gamma)$ is isometric to the Riemannian product of $(SL\mathcal{H}_m, 2 \gamma)$ with $\mathbb{R}$ and that $SL\mathcal{H}_m$ is diffeomorphic to $SL_m(\mathbb{C})/SU_m$. Hence, as above, we get that the De Rham factors of $(\mathcal{H}_{m}, 2 \gamma)$ are $SL_m(\mathbb{C})/SU_m$ and $\mathbb{R}$ when $m \ge 2$, while, when $m=1$, $\mathbb{R}$ is again the unique factor (see again for instance \cite[p.308]{BerConOl2003}).

We denote by $r'=r'(p,q,r)$ the quantity: $r' = r$ if $p=q = 0$, $r'=r+1$ if either $p=0$ or $q=0$ (but not both zero) and $r' = r +2$ if $p$ and $q$ are both non-zero. Note that $r' \ge 1$.  Now we can conclude with the following result
\end{rem}

\begin{prop}\label{DeRham-Gamma}
Let $M \in GL_n$ such that $\Gamma_M$ is elliptic, let $p, q, r, m_1, \cdots , m_r$ be as in Proposition \ref{decomp-Gamma} and $r'$ be as in Remark \ref{DeRham-casi}. Then, up to isometries,  the De Rham factors of $\left( Fix(\Gamma_M), g \right)$ are:

a) $\mathbb{R}^{r'}$;

b) $SL_p/S\mathcal{O}_p$, if $p \ge 2$;

c) $SL_q/S\mathcal{O}_q$, if $q \ge 2$;

d) $SL_{m_j}(\mathbb{C})/SU_{m_j}$ for all indices $j= 1, \cdots , r$ such that $m_j \ge 2$, if  $r \ge 1$.
\end{prop}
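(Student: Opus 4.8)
The plan is to combine the explicit Riemannian-product description of $(Fix(\Gamma_M), g)$ from Proposition \ref{decomp-Gamma} with the general recipe for De Rham decompositions of Riemannian products, using the known De Rham factorizations of $(\mathcal{P}_m, g)$ and $(\mathcal{H}_m, 2\gamma)$ recalled in Remark \ref{DeRham-casi}. First I would invoke Proposition \ref{decomp-Gamma} to write, up to isometry,
\begin{center}
$(Fix(\Gamma_M), g) \cong (\mathcal{P}_p, g) \times (\mathcal{H}_{m_1}, 2\gamma) \times \cdots \times (\mathcal{H}_{m_r}, 2\gamma) \times (\mathcal{P}_q, g)$,
\end{center}
and recall the elementary but crucial fact that the De Rham decomposition of a finite Riemannian product is obtained by taking the union of the De Rham decompositions of the factors and then amalgamating all the one-dimensional (flat) pieces into a single Euclidean factor $\mathbb{R}^m$ of the appropriate dimension (this follows from uniqueness of the De Rham decomposition, cf.\ \cite[Ch.\,IV, \S\,6]{KoNo1}).

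Next I would substitute the known factorizations from Remark \ref{DeRham-casi}: each $(\mathcal{P}_m, g)$ with $m \ge 2$ contributes an irreducible factor $SL_m/S\mathcal{O}_m$ and a flat $\mathbb{R}$, while $(\mathcal{P}_1, g) \cong \mathbb{R}$ contributes only a flat $\mathbb{R}$; similarly each $(\mathcal{H}_m, 2\gamma)$ with $m \ge 2$ contributes $SL_m(\mathbb{C})/SU_m$ and a flat $\mathbb{R}$, while $(\mathcal{H}_1, 2\gamma) \cong \mathbb{R}$ contributes only a flat $\mathbb{R}$. Collecting the irreducible factors gives exactly the list in (b), (c), (d). For the Euclidean factor, I would count the total number of flat $\mathbb{R}$-summands: one from $(\mathcal{P}_p, g)$ provided $p \ge 1$, one from $(\mathcal{P}_q, g)$ provided $q \ge 1$, and one from each $(\mathcal{H}_{m_j}, 2\gamma)$ with $m_j \ge 1$, i.e.\ for every $j = 1, \dots, r$ since $m_j > 0$ always. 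Thus the flat dimension is $r + [p \ge 1] + [q \ge 1]$, which is precisely the quantity $r'$ defined in Remark \ref{DeRham-casi}; hence the flat factor is $\mathbb{R}^{r'}$, giving (a). Since $n \ge 2$ forces $p + q + 2(m_1 + \cdots + m_r) \ge 2$, at least one of $p, q$ is positive or some $m_j$ exists, so $r' \ge 1$ and the Euclidean factor genuinely appears.

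The argument is essentially bookkeeping, and the only genuine subtlety — which I would state carefully rather than belabor — is that irreducible factors of the same isometry type occurring in different tensor slots are listed with their multiplicities and must not be merged: the De Rham decomposition is unique only up to isometry and permutation, so e.g.\ if $p = q \ge 2$ then $SL_p/S\mathcal{O}_p$ appears twice, and likewise $SL_{m_j}(\mathbb{C})/SU_{m_j}$ appears once for each index $j$ with $m_j \ge 2$ (even if some of the $m_j$ coincide in value — though by construction $\theta_i$ are distinct, the $m_j$ need not be). The main obstacle, such as it is, is purely expository: making sure the degenerate cases $p = 0$, $q = 0$, $r = 0$, $m_j = 1$, $p = 1$, $q = 1$ are all correctly absorbed into the definition of $r'$ and the conventions of Remark \ref{DeRham-casi}; once the flat-$\mathbb{R}$ count is pinned down as $r + [p \ge 1] + [q \ge 1] = r'$, the proof concludes immediately.
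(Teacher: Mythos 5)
Your proposal is correct and follows the same route the paper intends: Proposition \ref{decomp-Gamma} gives the product decomposition, and Remark \ref{DeRham-casi} supplies the De Rham factors of each factor $(\mathcal{P}_m, g)$ and $(\mathcal{H}_{m_j}, 2\gamma)$, with the flat $\mathbb{R}$-summands amalgamated into $\mathbb{R}^{r'}$ exactly as you count them (the paper states the proposition immediately after that remark, treating this bookkeeping as the proof). Your explicit attention to multiplicities and degenerate cases is a faithful elaboration of the same argument, not a different one.
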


\section{The fixed points of the isometries $\Gamma_M \circ \delta$}

\begin{prop}\label{class-ellitt-Gamma-chi}
Let $M \in GL_n$ and let us consider the isometry $\Gamma_M \circ \delta$ of $\mathcal{P}_n$. 

\smallskip

1) The following fact are equivalent

a) $\Gamma_M \circ \delta$ is elliptic; 

b) $M$  is semi-simple and all of its eigenvalues have the same modulus.

If this is the case, the RJS form of $M$ is of the type
\begin{center}
$J_M=|det(M)|^{1/n}[I_p \oplus E_{\theta_1}^{\oplus m_1} \oplus \cdots \oplus E_{\theta_r}^{\oplus m_r} \oplus (- I_q)]$
\end{center}
(with $p, q, r \ge 0$, $m_j>0$ for every possible $j$, $p+q+2m_1 + \cdots + 2m_r =n$ 

and $0 < \theta_1 < \theta_2 < \cdots < \theta_r < \pi$).

\smallskip

2) If $\Gamma_M \circ \delta$ is elliptic, then 
\begin{center}
$Fix(\Gamma_M \circ \delta) = Fix(\Gamma_{[\frac{M}{|det(M)|^{1/n}}]}) \cap \{P \in \mathcal{P}_n : det(P) = |det(M)|\}$.
\end{center}
\end{prop}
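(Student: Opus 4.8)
The plan is to handle the two parts separately, reducing everything to what has already been established for the isometries $\Gamma_N$ in \S 3.

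For part (1), I would first unwind the definition: $P \in Fix(\Gamma_M \circ \delta)$ means $\dfrac{MPM^T}{det(P)^{2/n}} = P$, i.e. $MPM^T = det(P)^{2/n}\,P$. Setting $t := det(P)^{1/n} > 0$ and $N := M/t$, this is exactly $NPN^T = P$, so $P \in Fix(\Gamma_N)$. Thus $\Gamma_M \circ \delta$ is elliptic if and only if some positive rescaling $N = M/t$ of $M$ makes $\Gamma_N$ elliptic; by Proposition \ref{class-ellitt-Gamma} this happens precisely when $M/t$ is semi-simple with all eigenvalues of modulus $1$, equivalently $M$ is semi-simple with all eigenvalues of the same modulus $t$. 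Taking determinants in $MPM^T = t^2 P$ gives $det(M)^2 = t^{2n}$, hence $t = |det(M)|^{1/n}$, which forces the modulus of every eigenvalue of $M$ to be $|det(M)|^{1/n}$ and pins down the stated form of $J_M$ via \ref{Jordan-form} applied to $M/|det(M)|^{1/n}$.

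For part (2), the computation just performed already shows the inclusion $\subseteq$: if $P \in Fix(\Gamma_M \circ \delta)$ then, writing $\tau := |det(M)|^{1/n}$, the determinant identity gives $det(P)^{2/n} = \tau^2$, so $det(P) = \tau^n = |det(M)|$, and then $MPM^T = det(P)^{2/n} P = \tau^2 P$ reads $(M/\tau)P(M/\tau)^T = P$, i.e. $P \in Fix(\Gamma_{M/\tau})$. Conversely, if $P \in Fix(\Gamma_{M/\tau})$ and $det(P) = |det(M)| = \tau^n$, then $MPM^T = \tau^2 P = \tau^2 P = det(P)^{2/n} P$, so $(\Gamma_M \circ \delta)(P) = MPM^T/det(P)^{2/n} = P$. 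This gives the reverse inclusion and hence equality.

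There is essentially no hard obstacle here; the only point requiring a little care is the bookkeeping with the scalar $\tau = |det(M)|^{1/n}$ and checking that the exponents $2/n$ in $\delta$ interact correctly with the determinant identity — in particular that ellipticity forces the \emph{exact} value $\tau = |det(M)|^{1/n}$ rather than merely some positive constant, which is what makes the intersection in (2) well-defined and links it back to the already-classified case $\Gamma_N$ with $N$ orthogonal-conjugate.
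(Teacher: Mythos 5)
Your part (2) is complete and is in substance the paper's own argument: the determinant computation (Remark \ref{det-P-j-chi}) forces $\det(P)=|\det(M)|$ for any fixed point $P$ of $\Gamma_M\circ\delta$, after which the fixed-point equation becomes the fixed-point equation for $\Gamma_{M/\tau}$ with $\tau=|\det(M)|^{1/n}$, and the converse inclusion is the same computation run backwards. Your part (1) also follows the paper's route (reduction to Proposition \ref{class-ellitt-Gamma}), and the direction (a) $\Rightarrow$ (b), together with the identification $t=|\det(M)|^{1/n}$ and the resulting shape of $J_M$, is fine.

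There is, however, a gap in (b) $\Rightarrow$ (a). You assert that ``$\Gamma_M\circ\delta$ is elliptic if and only if some positive rescaling $N=M/t$ of $M$ makes $\Gamma_N$ elliptic'', but your computation only establishes the ``only if'' half: from a fixed point $P$ of $\Gamma_M\circ\delta$ you produce a fixed point of $\Gamma_{M/t}$ with $t=\det(P)^{1/n}$. In the converse direction, hypothesis (b) gives (via Proposition \ref{class-ellitt-Gamma}) ellipticity of $\Gamma_{M/\tau}$ with $\tau=|\det(M)|^{1/n}$, i.e. some $P\in\mathcal{P}_n$ with $(M/\tau)P(M/\tau)^T=P$, hence $MPM^T=\tau^{2}P$; but such a $P$ is a fixed point of $\Gamma_M\circ\delta$ only if in addition $\det(P)^{2/n}=\tau^{2}$, i.e. $\det(P)=|\det(M)|$, which is not automatic. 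The missing (one-line) step is that $Fix(\Gamma_{M/\tau})$ is stable under multiplication by positive scalars, because a congruence is linear: replacing $P$ by $sP$ with $s>0$ chosen so that $s^{n}\det(P)=|\det(M)|$ produces a genuine fixed point of $\Gamma_M\circ\delta$. This is exactly the point the paper makes when it notes that the nonempty set $Fix(\Gamma_{M/|\det(M)|^{1/n}})$ must meet the slice $\{P\in\mathcal{P}_n:\det(P)=|\det(M)|\}$ ``because every congruence is linear''. With that observation added, your proof is correct and essentially coincides with the paper's.
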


\begin{proof}
We first prove part (2). Assume that $\dfrac{MPM^T}{det(P)^{2/n}} =P$. By Remark \ref{det-P-j-chi}, we get:
$|det(M)|=det(P)$ and therefore $(\dfrac{M}{|det(M)|^{1/n}}) P \big( \dfrac{M}{|det(M)|^{1/n}}\big)^T = P$.

The other inclusion follows easily in a similar way.

Part (1) follows from (2) and from Proposition \ref{class-ellitt-Gamma}, since
 $Fix(\Gamma_{[\frac{M}{|det(M)|^{1/n}}]}) \ne \emptyset$ implies $Fix(\Gamma_{[\frac{M}{|det(M)|^{1/n}}]}) \cap \{P \in \mathcal{P}_n : det(P) = |det(M)|\} \ne \emptyset$ too, because every congruence is linear.
\end{proof}

\begin{prop}\label{altraFixGamma-delta}
Let $M \in GL_n$, assume that the isometry $\Gamma_M \circ \delta$ of $\mathcal{P}_n$ is elliptic and fix a matrix $F_0 \in GL_n$ such that $F_0 J_M F_0^{-1} = M$ and such that $|det(F_0)| = \sqrt{|det(M)|}$.
Then 
\begin{center}
$Fix(\Gamma_M \circ \delta) = \{F_0AA^TF_0^T : A \in \mathcal{C}_{J_M} , \, det(AA^T)=1\}$.
\end{center}
\end{prop}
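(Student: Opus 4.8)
The plan is to reduce the statement to Proposition \ref{Fix-Gamma} applied to $\Gamma_M$, using part (2) of Proposition \ref{class-ellitt-Gamma-chi} to cut down $Fix(\Gamma_M)$ by the determinant condition. First I would observe that, since $\Gamma_M\circ\delta$ is elliptic, $M/|\det(M)|^{1/n}$ is semisimple with all eigenvalues of modulus $1$, so $\Gamma_M$ itself is elliptic (its RJS form $J_M$ is $|\det M|^{1/n}$ times an orthogonal matrix, as recorded in part (1)), and $F_0$ with $F_0 J_M F_0^{-1}=M$ exists; the extra normalization $|\det(F_0)|=\sqrt{|\det M|}$ is harmless since multiplying $F_0$ by a scalar does not disturb $F_0 J_M F_0^{-1}=M$. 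By Proposition \ref{Fix-Gamma}(ii) we then have $Fix(\Gamma_M)=\{F_0 AA^T F_0^T : A\in\mathcal{C}_{J_M}\}$.

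Next I would apply part (2) of Proposition \ref{class-ellitt-Gamma-chi}: since $J_M=|\det M|^{1/n}\cdot(\text{orthogonal})$, we have $\mathcal{C}_{J_M}=\mathcal{C}_{[M/|\det M|^{1/n}]}$, so $Fix(\Gamma_M)=Fix(\Gamma_{[M/|\det M|^{1/n}]})$, and therefore
$$
Fix(\Gamma_M\circ\delta)=\{F_0 AA^T F_0^T : A\in\mathcal{C}_{J_M}\}\cap\{P\in\mathcal{P}_n:\det P=|\det M|\}.
$$
It then remains to translate the determinant condition on $P=F_0 AA^T F_0^T$ into the condition $\det(AA^T)=1$ on $A$. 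Computing determinants gives $\det P=\det(F_0)^2\det(AA^T)=|\det M|\cdot\det(AA^T)$, using $|\det F_0|=\sqrt{|\det M|}$; hence $\det P=|\det M|$ if and only if $\det(AA^T)=1$. Combining the two displays yields exactly the claimed description.

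I do not expect any serious obstacle here: the proof is essentially a bookkeeping composition of the two already-established results (Proposition \ref{Fix-Gamma} and Proposition \ref{class-ellitt-Gamma-chi}) together with the multiplicativity of the determinant. The only point requiring a word of care is the consistency of the normalization $|\det F_0|=\sqrt{|\det M|}$ with the existence of a matrix $F_0$ conjugating $J_M$ to $M$ — but since $\det(F_0 J_M F_0^{-1})=\det J_M=|\det M|=\det M$ forces $|\det M|=|\det M|$ with no constraint on $\det F_0$ beyond nonvanishing, we may rescale any valid $F_0$ by the positive scalar $\sqrt{|\det M|}/|\det F_0|$ without affecting the conjugation relation, so such an $F_0$ indeed exists.
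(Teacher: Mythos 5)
Your final formula and the determinant bookkeeping at the end are fine, but the route you take to it contains a step that is simply false unless $|det(M)|=1$. You assert that, because $M/|det(M)|^{1/n}$ is semisimple with eigenvalues of modulus $1$, ``$\Gamma_M$ itself is elliptic'', and later that $Fix(\Gamma_M)=Fix(\Gamma_{[M/|det(M)|^{1/n}]})$ because the two relevant commutants coincide. Neither claim holds in general: if $P\in\mathcal{P}_n$ satisfies $MPM^T=P$, taking determinants forces $det(M)=\pm 1$, so for $|det(M)|\neq 1$ one has $Fix(\Gamma_M)=\emptyset$, while $Fix(\Gamma_{[M/|det(M)|^{1/n}]})$ is nonempty; equality of commutants of the Jordan forms says nothing about equality of the fixed sets of the two (different) congruences. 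Equivalently, by Proposition \ref{class-ellitt-Gamma}, $\Gamma_M$ is elliptic only when the eigenvalues of $M$ have modulus $1$, i.e.\ $|det(M)|=1$, whereas ellipticity of $\Gamma_M\circ\delta$ only gives constant modulus $|det(M)|^{1/n}$. So as written you apply Proposition \ref{Fix-Gamma}(ii) to an isometry that need not be elliptic, and the display you deduce from it is obtained from two incorrect intermediate identities.

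The repair is short and is exactly what the paper does: apply Proposition \ref{Fix-Gamma}(ii) to the \emph{normalized} matrix $M/|det(M)|^{1/n}$, whose congruence is elliptic. Since $J_{[M/|det(M)|^{1/n}]}=J_M/|det(M)|^{1/n}$, the same $F_0$ satisfies $F_0\, J_{[M/|det(M)|^{1/n}]}\, F_0^{-1}=M/|det(M)|^{1/n}$, and $\mathcal{C}_{J_{[M/|det(M)|^{1/n}]}}=\mathcal{C}_{J_M}$ (scaling does not change the commutant), so
$Fix(\Gamma_{[M/|det(M)|^{1/n}]})=\{F_0AA^TF_0^T : A\in\mathcal{C}_{J_M}\}$.
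Intersecting with $\{P : det(P)=|det(M)|\}$ via Proposition \ref{class-ellitt-Gamma-chi}(2) and using $det(F_0AA^TF_0^T)=|det(M)|\,det(AA^T)$ (here your normalization $|det(F_0)|=\sqrt{|det(M)|}$ enters, and your remark that such an $F_0$ exists by rescaling is correct) gives the stated description. A minor additional slip: in your last paragraph $det(J_M)=det(M)$, not $|det(M)|$, when $det(M)<0$; this does not affect the argument.
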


\begin{proof}
If $M=F_0 J_M F_0^{-1}$, then $\frac{M}{|det(M)|^{1/n}}= F_0 J_{[\frac{M}{|det(M)|^{1/n}}]} F_0^{-1}$. From the Proposition \ref{Fix-Gamma}, we get that $F_0 F_0^T$ is a fixed point of $\Gamma_{[\frac{M}{|det(M)|^{1/n}}]}$ with $det(F_0 F_0^T)= |det(M)|$. Hence, again by Proposition \ref{Fix-Gamma} and by Proposition \ref{class-ellitt-Gamma-chi}, we obtain:

 $Fix(\Gamma_M \circ \delta) = \{ F_0AA^TF_0^T : A \in \mathcal{C}_{J_M}, \, det(AA^T)=1 \}$, since the matrices commuting with $J_M$ are precisely the matrices commuting with $J_{[\frac{M}{|det(M)|^{1/n}}]}$.
\end{proof}

\begin{nota}
We denote by $\mathbf{S} \big( (\mathcal{P}_p, g) \times (\mathcal{H}_{m_1}, 2\gamma) \times \cdots \times (\mathcal{H}_{m_r}, 2\gamma) \times (\mathcal{P}_q, g) \big)$ the Riemannian submanifold of the  Riemannian product \\
$(\mathcal{P}_p, g) \times (\mathcal{H}_{m_1}, 2\gamma) \times \cdots \times (\mathcal{H}_{m_r}, 2\gamma) \times (\mathcal{P}_q, g)$, consisting in elements\\ 
$(A, B_1, \cdots, B_r, C) \in  \big( \mathcal{P}_p \times \mathcal{H}_{m_1}  \times \cdots \times \mathcal{H}_{m_r} \times \mathcal{P}_q \big)$ such that\\ 
$det(A) \, [det(B_1)]^2 \, \cdots \, [det(B_r)]^2 \, det(C) = 1$.
\end{nota}

\begin{prop}\label{decomp-Gamma-chi}
Let $M \in GL_n$ such that $\Gamma_M \circ \delta$ is elliptic and set $\eta = |det(M)|^{1/n}$ so the eigenvalues of $M$ are: $\eta$  of multiplicity $p$, $-\eta$ of multiplicity $q$ , $\eta \, e^{\pm \theta_1}$ both with multiplicity $m_1$. $\cdots$, up to $\eta \, e^{\pm \theta_r}$ both with multiplicity $m_r$, with $p, q, r \ge 0$, $m_j > 0$ for $1 \le j \le r$, 
$p+q+ 2m_1 + \cdots +2m_r=n$
and $0 < \theta_1 < \cdots < \theta_r <\pi$. Fix $F_0 \in GL_n$ such that $F_0 J_M F_0^{-1} = M$ and such that $|det(F_0)| = \sqrt{|det(M)|}$. Then
\begin{center}
$\Gamma_{F_0^{-1}}(Fix(\Gamma_M \circ \delta))= (\mathcal{P}_p \oplus  \rho (\mathcal{H}_{m_1}) \oplus \cdots \oplus \rho (\mathcal{H}_{m_r}) \oplus \mathcal{P}_q) \cap SL\mathcal{P}_n$.
\end{center}
Hence $(Fix(\Gamma_M \circ \delta), g)$
is a closed simply connected totally geodesic symmetric Riemannian submanifold of $(\mathcal{P}_n, g)$ isometric to
\begin{center}
$\mathbf{S} \big( (\mathcal{P}_p, g) \times (\mathcal{H}_{m_1}, 2\gamma) \times \cdots \times (\mathcal{H}_{m_r}, 2\gamma) \times (\mathcal{P}_q, g) \big)$.
\end{center}
\end{prop}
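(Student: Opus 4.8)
The strategy is to reduce everything to the already-established Proposition \ref{altraFixGamma-delta} together with the description of $\mathcal{C}_{J_M}$ from Lemma \ref{lemma-su-J}, and then simply track the determinant condition through the isometry $\Gamma_{F_0^{-1}}$. First I would start from Proposition \ref{altraFixGamma-delta}, which gives $Fix(\Gamma_M \circ \delta) = \{F_0 A A^T F_0^T : A \in \mathcal{C}_{J_M},\ det(AA^T) = 1\}$; applying the congruence $\Gamma_{F_0^{-1}}$ (an isometry of $(\mathcal{P}_n,g)$) to both sides yields immediately that $\Gamma_{F_0^{-1}}(Fix(\Gamma_M \circ \delta))$ equals the set of matrices $A A^T$ with $A \in \mathcal{C}_{J_M}$ and $det(AA^T) = 1$. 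By Lemma \ref{lemma-su-J} — noting that $\mathcal{C}_{J_M} = \mathcal{C}_{[J_M/\eta]}$ since $J_M$ and its scalar multiple have the same commutant — the non-singular matrices commuting with $J_M$ are exactly $GL_p \oplus \rho(GL_{m_1}(\mathbb{C})) \oplus \cdots \oplus \rho(GL_{m_r}(\mathbb{C})) \oplus GL_q$, so the matrices $AA^T$ run through $\mathcal{P}_p \oplus \rho(\mathcal{H}_{m_1}) \oplus \cdots \oplus \rho(\mathcal{H}_{m_r}) \oplus \mathcal{P}_q$ (using, as in Remark \ref{rho}, that $\rho(ZZ^*) = \rho(Z)\rho(Z)^T$ and that $\rho$ maps $\mathcal{H}_h$ onto $\rho(GL_h(\mathbb{C})) \cap \mathcal{P}_{2h}$); intersecting with the condition $det = 1$ gives the displayed first equality.

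Next I would translate the determinant-one condition into the language of the Riemannian product. For $B \in \mathcal{P}_p \oplus \rho(\mathcal{H}_{m_1}) \oplus \cdots \oplus \rho(\mathcal{H}_{m_r}) \oplus \mathcal{P}_q$ written as $A \oplus \rho(B_1) \oplus \cdots \oplus \rho(B_r) \oplus C$, one has $det(B) = det(A) \cdot \prod_j det(\rho(B_j)) \cdot det(C) = det(A)\,[det(B_1)]^2 \cdots [det(B_r)]^2\, det(C)$, where the formula $det(\rho(Z)) = |det(Z)|^2 = [det(Z)]^2$ (for $Z$ hermitian positive definite, $det(Z) > 0$) is recorded in Remark \ref{rho}. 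Hence the intersection with $SL\mathcal{P}_n$ corresponds precisely, under the canonical isometry of Proposition \ref{decomp-Gamma} between $(\mathcal{P}_p \oplus \rho(\mathcal{H}_{m_1}) \oplus \cdots \oplus \rho(\mathcal{H}_{m_r}) \oplus \mathcal{P}_q, g)$ and $(\mathcal{P}_p, g) \times (\mathcal{H}_{m_1}, 2\gamma) \times \cdots \times (\mathcal{H}_{m_r}, 2\gamma) \times (\mathcal{P}_q, g)$, to the submanifold $\mathbf{S}(\cdots)$ cut out by $det(A)\,[det(B_1)]^2 \cdots [det(B_r)]^2\, det(C) = 1$.

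Finally, the geometric conclusion: since $\Gamma_{F_0^{-1}}$ is an isometry, $(Fix(\Gamma_M \circ \delta), g)$ is isometric to $\mathbf{S}(\cdots)$; and the remaining structural claims (closed, simply connected, totally geodesic, symmetric) follow from Theorem \ref{Fix-Riem-manifold} applied to the elliptic isometry $\Gamma_M \circ \delta$, so they need not be reproved here. I expect no serious obstacle: the only point requiring a little care is the bookkeeping of the determinant identity for $\rho$ and making sure the hermitian positive-definite case gives a genuine square (not just a modulus squared), so that the defining equation of $\mathbf{S}(\cdots)$ comes out exactly as stated; once that is checked the proof is essentially a concatenation of Propositions \ref{altraFixGamma-delta}, \ref{decomp-Gamma}, Lemma \ref{lemma-su-J}, and Remark \ref{rho}.
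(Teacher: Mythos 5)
Your proposal is correct and takes essentially the same route as the paper: Proposition \ref{altraFixGamma-delta} combined with Lemma \ref{lemma-su-J}, the identity $\rho(ZZ^*)=\rho(Z)\rho(Z)^T$ and $det(\rho(Z))=|det(Z)|^2$ from Remark \ref{rho}, and the determinant bookkeeping yield exactly the displayed equality and the identification with $\mathbf{S}\big( (\mathcal{P}_p, g) \times (\mathcal{H}_{m_1}, 2\gamma) \times \cdots \times (\mathcal{H}_{m_r}, 2\gamma) \times (\mathcal{P}_q, g) \big)$. The only (harmless) difference is that the paper gets the structural properties by applying Proposition \ref{Gruppi-alg-riduttivi} to the algebraic reductive subgroup $G'=\big( GL_p \oplus \rho (GL_{m_1}(\mathbb{C})) \oplus \cdots \oplus \rho (GL_{m_r}(\mathbb{C})) \oplus GL_q \big) \cap SL_n$, whereas you invoke Theorem \ref{Fix-Riem-manifold} for the elliptic isometry $\Gamma_M \circ \delta$, which is equally legitimate.
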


\begin{proof}
It is analogous to the proof of Proposition \ref{decomp-Gamma} via Propositions \ref{altraFixGamma-delta} and \ref{Gruppi-alg-riduttivi}, taking into account that
$ (\mathcal{P}_p \oplus  \rho (\mathcal{H}_{m_1}) \oplus \cdots \oplus \rho (\mathcal{H}_{m_r}) \oplus \mathcal{P}_q) \cap SL\mathcal{P}_n = G' \cap \mathcal{P}_n$ where $G'$ is the algebraic reductive subgroup of $GL_n$ defined by\\ 
$G':=\big( GL_p \oplus \rho (GL_{m_1}(\mathbb{C})) \oplus \cdots \oplus \rho (GL_{m_r}(\mathbb{C})) \oplus GL_q \big) \cap SL_n$ and that $(G' \cap \mathcal{P}_n, g)$ is isometric to 
$\mathbf{S} \big( (\mathcal{P}_p, g) \times (\mathcal{H}_{m_1}, 2\gamma) \times \cdots \times (\mathcal{H}_{m_r}, 2\gamma) \times (\mathcal{P}_q, g) \big)$.
\end{proof}

\begin{prop}\label{DeRham-Gamma-delta}
Let $M \in GL_n$ such that $\Gamma_M \circ \delta$ is elliptic, let $p, q, r, m_1, \cdots , m_r$ be as in Proposition \ref{decomp-Gamma-chi} and $r'$ be as in Remark \ref{DeRham-casi}. Then, up to isometries,  the De Rham factors of $\left( Fix(\Gamma_M \circ \delta), g \right)$ are:

a) $\mathbb{R}^{r'-1}$, if $r' \ge 2$;

b) $SL_p/S\mathcal{O}_p$, if $p \ge 2$;

c) $SL_q/S\mathcal{O}_q$, if $q \ge 2$;

d) $SL_{m_j}(\mathbb{C})/SU_{m_j}$ for all indices $j= 1, \cdots , r$ such that $m_j \ge 2$, if $r \ge 1$.
\end{prop}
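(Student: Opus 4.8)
The plan is to reduce the statement to the previous Proposition \ref{decomp-Gamma-chi} together with the De Rham decomposition of the factors already recorded in Remark \ref{DeRham-casi}. By Proposition \ref{decomp-Gamma-chi}, $(Fix(\Gamma_M \circ \delta), g)$ is isometric to $\mathbf{S}\big((\mathcal{P}_p, g) \times (\mathcal{H}_{m_1}, 2\gamma) \times \cdots \times (\mathcal{H}_{m_r}, 2\gamma) \times (\mathcal{P}_q, g)\big)$, so it suffices to compute the De Rham decomposition of this ``special'' submanifold, namely the subset of the product cut out by the single equation on the product of determinants.

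First I would observe that each factor $(\mathcal{P}_m, g)$ splits isometrically as $(SL\mathcal{P}_m, g) \times \mathbb{R}$ and each $(\mathcal{H}_m, 2\gamma)$ splits as $(SL\mathcal{H}_m, 2\gamma) \times \mathbb{R}$, where the $\mathbb{R}$-factor is precisely the geodesic $\{tI_m : t > 0\}$ parametrized by $\log\det$, as recalled in Remark \ref{DeRham-casi}. Under these splittings the full product is isometric to the product of all the semisimple pieces $SL\mathcal{P}_p$, $SL\mathcal{H}_{m_1}, \dots, SL\mathcal{H}_{m_r}$, $SL\mathcal{P}_q$ (those of them that are nontrivial, i.e. of size $\ge 2$) with a Euclidean factor $\mathbb{R}^{r'}$, where $r'$ counts exactly the $\mathbb{R}$-summands contributed: $r$ from the hermitian blocks plus one for each of $\mathcal{P}_p$, $\mathcal{P}_q$ that is actually present. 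The determinant constraint $\det(A)\,[\det(B_1)]^2 \cdots [\det(B_r)]^2 \det(C) = 1$ involves, after taking logarithms, only the $\mathbb{R}$-coordinates, and it is a single nonzero linear equation in those $r'$ coordinates (the coefficients are $1, 2, \dots, 2, 1$, all nonzero, so the functional is onto). Hence $\mathbf{S}(\cdots)$ is isometric to (product of the nontrivial $SL$-pieces) $\times$ (the affine hyperplane $\mathbb{R}^{r'-1}$ inside $\mathbb{R}^{r'}$), with the induced flat metric on that hyperplane; when $r' = 1$ the hyperplane is a point and no Euclidean factor survives. Since $SL\mathcal{P}_p \cong SL_p/S\mathcal{O}_p$ and $SL\mathcal{H}_m \cong SL_m(\mathbb{C})/SU_m$ are irreducible symmetric spaces (for size $\ge 2$), and the Euclidean factor is $\mathbb{R}^{r'-1}$, this is exactly the claimed list, and uniqueness of the De Rham decomposition finishes the argument.

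The one point requiring a little care — and the main (modest) obstacle — is checking that the product of the semisimple pieces really carries no nontrivial Euclidean de Rham factor, so that the decomposition produced is genuinely the de Rham decomposition and not merely \emph{a} Riemannian product. This is standard: $SL_m/S\mathcal{O}_m$ and $SL_m(\mathbb{C})/SU_m$ are irreducible of noncompact type for $m \ge 2$, hence have no flat factor, and a finite product of such has all Euclidean directions accounted for by the explicit $\mathbb{R}$-summands above; combined with the fact that the constraint is affine-linear and transverse to each $\mathbb{R}$-summand (so restricting to it only lowers the Euclidean dimension by one and does not touch the semisimple pieces), the enumeration is complete. The only bookkeeping is the definition of $r'$ and its reduction to $r'-1$, exactly as in Remark \ref{DeRham-casi}, together with the degenerate case $r' = 1$ (equivalently $p = q = 0$ and $r = 1$, with $\mathbf{S}$ of a single $(\mathcal{H}_{m_1}, 2\gamma)$ being just $(SL\mathcal{H}_{m_1}, 2\gamma)$), where no $\mathbb{R}$-factor appears and item (a) is vacuous.
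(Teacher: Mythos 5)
Your argument is correct and is essentially the paper's own: both reduce via Proposition \ref{decomp-Gamma-chi} to the determinant-one locus $\mathbf{S}\big((\mathcal{P}_p, g) \times (\mathcal{H}_{m_1}, 2\gamma) \times \cdots \times (\mathcal{H}_{m_r}, 2\gamma) \times (\mathcal{P}_q, g)\big)$, split each factor as its $SL$-part times an $\mathbb{R}$-line of log-determinants (Remark \ref{DeRham-casi}), observe that the constraint is a single nondegenerate linear condition involving only those flat directions, and conclude by the identifications $SL\mathcal{P}_m \cong SL_m/S\mathcal{O}_m$, $SL\mathcal{H}_m \cong SL_m(\mathbb{C})/SU_m$ and uniqueness of the De Rham decomposition. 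The only difference is presentational: the paper writes an explicit isometry onto $(SL\mathcal{P}_p,g)\times\prod_{i=1}^r(SL\mathcal{H}_{m_i},2\gamma)\times(SL\mathcal{P}_q,g)\times(\mathbb{R}^{r+1},\tau)$ with $\tau$ an explicit flat metric (using the constraint to eliminate one log-determinant), whereas you keep all $r'$ Euclidean coordinates and cut with a hyperplane, which yields the same $\mathbb{R}^{r'-1}$ factor.
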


\begin{proof}
We proof this result under the assumption that $p, q, r$ are all non-zero; in this case $r'-1 = r+1$. The proof can be easily adapted to the cases when some of these values vanishes.

It is a standard computation to show that the mapping defined by: 

$F(A, B_1, \cdots , B_r, C)=
\big( \dfrac{A}{det(A)^{1/p}}, \dfrac{B_1}{det(B_1)^{1/m_1}} , \cdots , \dfrac{B_r}{det(B_r)^{1/m_r}}, \dfrac{C}{det(C)^{1/q}},$

\smallskip

\ \ \ \ \ \ \ \ \ \ \ \ \ \ \ \ \ \ \ \ \ \ \ \ \ \ \ \ \ \ \ \ \ \ \ \ \ \ \ \ \ \ \ \ \ \ \ \ \   $\ln(det(A) ),
2\ln(det(B_1)), \cdots ,
2\ln(det(B_r)) \big)$

is an isometry from $\mathbf{S} \big( (\mathcal{P}_p, g) \times (\mathcal{H}_{m_1}, 2\gamma) \times \cdots \times (\mathcal{H}_{m_r}, 2\gamma) \times (\mathcal{P}_q, g) \big)$ onto

$(SL\mathcal{P}_p, g) \times \prod_{i=1}^r (SL\mathcal{H}_{m_i}, 2 \gamma) \times (SL\mathcal{P}_q, g) \times (\mathbb{R}^{r+1}, \tau)$,

where $\tau=\dfrac{dx_0^2}{p}+ \sum_{i=1}^r \dfrac{dx_i^2}{2 m_i} + \dfrac{(\sum_{i=0}^r d x_i)^2}{q}$ is a flat Riemannian metric on $\mathbb{R}^{r+1}$

and that the inverse of $F$ is: $F^{-1}(\alpha, \beta_1, \cdots , \beta_r, \gamma,  t_0, t_1, \cdots , t_r))=\\
\big( e^{t_0/p} \, \alpha, e^{t_1/2m_1}\,  \beta_1, \cdots , e^{t_r/2m_r}\,  \beta_r, e^{- (\sum_{i=0}^r t_i)/q} \, \gamma \big)
$.

Since $\tau$ is a flat metric, $(\mathbb{R}^{r+1}, \tau)$ is isometric to the Euclidean space $\mathbb{R}^{r+1}$.

This allows to conclude arguing as in Remark \ref{DeRham-casi}.
\end{proof}

\section{The fixed points of the isometries $\Gamma_M \circ j$}

\begin{prop}\label{class-ellitt-Gamma-j}
Let $M\in GL_n$ and let us consider the isometry $\Gamma_M \circ j$ of $\mathcal{P}_n$.
The following facts are equivalent:

a) $\Gamma_M \circ j$ is elliptic;

b) $M$ and $M^{-T}$ are $\mathbb{R}$-congruent via a positive definite matrix;

c) $M$ is $\mathbb{R}$-congruent to an orthogonal matrix; 

d) $M$ is $\mathbb{R}$-congruent to a normal matrix; 

e) $\Gamma_{M M^{-T}}$ is elliptic;

f) $MM^{-T}$ is semi-simple with eigenvalues of modulus $1$;

g) $MM^{-T}$ is similar to an orthogonal matrix.
\end{prop}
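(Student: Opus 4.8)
The plan is to prove the chain of equivalences by establishing a convenient cycle of implications, using the earlier characterization of elliptic congruences (Proposition \ref{class-ellitt-Gamma}) and the basic facts on $\mathbb{R}$-congruence, polar decomposition and normal matrices from \S 1.

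First I would deal with the equivalence (a) $\Leftrightarrow$ (b) directly from the definition. A matrix $P \in \mathcal{P}_n$ is fixed by $\Gamma_M \circ j$ precisely when $MP^{-1}M^T = P$, i.e. when $P = M P^{-1} M^T$; writing this as $M = P M^{-T} P$ (valid since $P$ is invertible), it says exactly that $P$ realizes an $\mathbb{R}$-congruence between $M^{-T}$ and $M$ through the positive definite matrix $P$. So $Fix(\Gamma_M \circ j) \ne \emptyset$ iff such a $P$ exists, which is (b). This also tells us that $Fix(\Gamma_M \circ j)$ is the set of $P \in \mathcal{P}_n$ with $MP^{-1}M^T = P$, a fact presumably needed in later propositions of \S 5.

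Next I would run through (b) $\Rightarrow$ (c) $\Rightarrow$ (d) $\Rightarrow$ (e), then close the loop with (e) $\Rightarrow$ (b) and handle (e) $\Leftrightarrow$ (f) $\Leftrightarrow$ (g) using Proposition \ref{class-ellitt-Gamma} applied to the matrix $MM^{-T}$. For (b) $\Rightarrow$ (c): if $M^{-T} = P M P^{-1}$ wait — more carefully, from $M = P M^{-T} P$ with $P = \sqrt{P}\sqrt{P}$ we get $\sqrt{P}^{-1} M \sqrt{P}^{-1} = \sqrt{P} M^{-T} \sqrt{P} = (\sqrt{P}^{-1} M^T \sqrt{P}^{-1})^{-1} = (\sqrt{P}^{-1} M \sqrt{P}^{-1})^{-T}$, so $N := \sqrt{P}^{-1} M \sqrt{P}^{-1} = \Gamma_{\sqrt{P}^{-1}}(M)$ satisfies $N = N^{-T}$, i.e. $N N^T = I_n$, so $N$ is orthogonal; this gives (c). The implication (c) $\Rightarrow$ (d) is immediate since orthogonal matrices are normal. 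For (d) $\Rightarrow$ (e): if $M = \Gamma_C(N)$ with $N$ normal, then $MM^{-T} = C N C^T (C N C^T)^{-T} = C N C^T C^{-T} N^{-1} C^{-1} = C (N C^T C^{-T} N^{-1}) C^{-1}$; hmm, this isn't obviously a similarity to something nice unless I use Remark \ref{dec-pol-norm}. Better: by Remark \ref{dec-pol-norm}, a normal $N$ is $\mathbb{R}$-congruent to the orthogonal factor $U$ of its polar decomposition, so $\mathbb{R}$-congruent to an orthogonal matrix; hence (d) $\Rightarrow$ (c), and it suffices to show (c) $\Rightarrow$ (e) $\Rightarrow$ (b). If $M = \Gamma_C(O)$ with $O \in \mathcal{O}_n$, then $M^{-T} = \Gamma_C(O^{-T}) = \Gamma_C(O)$ since $O^{-T} = O$, so $M M^{-T} = \Gamma_C(O) \Gamma_C(O)^{-1}$ — wait, I need $MM^{-T}$, let me instead compute $M M^{-T} = \Gamma_C(O)\,(\Gamma_C(O))^{-T}$; since $\Gamma_C(O)^{-T} = C^{-T} O^{-T} C^{-1} = C^{-T} O C^{-1}$, we get $M M^{-T} = C O C^T C^{-T} O C^{-1}$, which is similar (via $C$) to $O C^T C^{-T} O = O (C^T C^{-T}) O$; this is a product of three orthogonal matrices when... no, $C^T C^{-T}$ need not be orthogonal. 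The cleaner route: $M M^{-T}$ is always similar to $M^{-T} M = (M^T M^{-1})^{-1}$... Actually the intended argument is surely: $M$ is $\mathbb{C}$-congruent to orthogonal $O$ iff $M M^{-T}$ is similar to $O O^{-T} = O^2$... no. Let me reconsider: the key identity (from Remark-Definition part c) is that $A, B$ are $\mathbb{C}$-congruent iff $AA^{-T}$ and $BB^{-T}$ are similar; so if $M$ is $\mathbb{R}$-congruent to $O$ orthogonal, then $M M^{-T}$ is similar to $O O^{-T} = O \cdot O = O^2$ (since $O^{-T}=O$ for orthogonal $O$), wait $O^{-T} = (O^{-1})^T = (O^T)^T = O$. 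So $M M^{-T} \sim O^2$, a product/power of an orthogonal matrix, hence semisimple with unimodular eigenvalues, giving (f), hence (g), hence by Proposition \ref{class-ellitt-Gamma} also (e).

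For the remaining implication (e) $\Rightarrow$ (b) — or equivalently closing whichever gap remains — I would argue: if $\Gamma_{MM^{-T}}$ is elliptic then by Proposition \ref{Fix-Gamma} there is $P \in \mathcal{P}_n$ with $(MM^{-T}) P (MM^{-T})^T = P$; unwinding $(MM^{-T})^T = M^{-1} M^T$, this reads $M M^{-T} P M^{-1} M^T = P$, i.e. $M^{-T} P M^{-1} = M^{-1} P M^{-T}$, i.e. $Q := M^{-1} P M^{-T} \in \mathcal{P}_n$ commutes with... hmm, it says $M^{-T} P M^{-1}$ and $M^{-1} P M^{-T}$ are equal, which is $\Gamma_{M^{-1}}(P^{\,?})$... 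This computation needs a little care but leads to producing a fixed point of $\Gamma_M \circ j$; the honest statement is that one should track through explicitly. \textbf{The main obstacle} I anticipate is precisely this last step: extracting from the \emph{mere existence} of a fixed point of $\Gamma_{MM^{-T}}$ an \emph{actual} positive-definite congruence between $M$ and $M^{-T}$, i.e. going backwards from (e)/(f)/(g) to (b). The forward directions (b) $\Rightarrow$ (c) $\Rightarrow$ (d) and (c) $\Rightarrow$ (f) $\Rightarrow$ (g) $\Rightarrow$ (e) are essentially bookkeeping with transposes and the cited results, but the converse requires either an averaging/square-root argument or a direct construction of $P$ from the invariant data of $MM^{-T}$, and that is where the real content sits. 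Everything else then assembles into the stated equivalence.
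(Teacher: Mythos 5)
Your forward chain is fine and essentially the paper's: (a) $\Leftrightarrow$ (b) from $MP^{-1}M^T=P \iff M=PM^{-T}P$; (b) $\Rightarrow$ (c) via $N=\sqrt{P}^{-1}M\sqrt{P}^{-1}$; (c) $\Leftrightarrow$ (d) via Remark \ref{dec-pol-norm}; and (c) $\Rightarrow$ (f) $\Rightarrow$ (g) $\Rightarrow$ (e) via invariance of the cosquare under congruence and Proposition \ref{class-ellitt-Gamma}. The genuine gap is exactly where you flag it: no implication from (e)/(f)/(g) back to (a)--(d) is proved, so the cycle never closes. Moreover the route you sketch cannot work as stated: starting from an arbitrary fixed point $P$ of $\Gamma_{MM^{-T}}$ you cannot "track through" to a fixed point of $\Gamma_M\circ j$, because $Fix(\Gamma_M\circ j)$ is in general a \emph{proper} subset of $Fix(\Gamma_{MM^{-T}})$ (Lemma \ref{Fix-Gamma-j} gives only one inclusion); for instance with $M=I_n$ every $P\in\mathcal{P}_n$ satisfies $(MM^{-T})P(MM^{-T})^T=P$, while $Fix(j)=\{I_n\}$. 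So the single equation $(MM^{-T})P(MM^{-T})^T=P$ for a given $P$ simply does not imply $MP^{-1}M^T=P$, and no algebraic unwinding of it can yield (b).

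The missing idea, which is how the paper closes the loop, is to pass to $M^{-T}M$ (similar to $MM^{-T}$, so $\Gamma_{M^{-T}M}$ is elliptic exactly when $\Gamma_{MM^{-T}}$ is, since conditions (f)/(g) of Proposition \ref{class-ellitt-Gamma} are similarity invariants) and to observe that for any $H\in GL_n$ the matrix $HMH^T$ is normal if and only if $(M^{-T}M)(H^TH)(M^{-T}M)^T=H^TH$: indeed $(HMH^T)(HM^TH^T)=(HM^TH^T)(HMH^T)$ reduces, after cancelling $H$ on the left and $H^T$ on the right and multiplying by $M^{-T}$ on the left and $M^{-1}$ on the right, to precisely that fixed-point equation. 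Hence ellipticity of $\Gamma_{MM^{-T}}$ produces a fixed point of $\Gamma_{M^{-T}M}$, which (written as $H^TH$) produces a congruence making $M$ normal, i.e.\ (d); then Remark \ref{dec-pol-norm} gives (c), and the easy check that (c) implies (a) (if $M=COC^T$ with $O$ orthogonal, then $CC^T\in Fix(\Gamma_M\circ j)$) finishes. With this computation inserted, your argument becomes a complete proof equivalent to the paper's.
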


\begin{proof} 
The equivalence (a) $\Leftrightarrow$ (b) follows by remarking that $M(CC^T)^{-1}M^T = CC^T$ is equivalent to $M = CC^T M^{-T}CC^T$.

The equivalence: (a) $\Leftrightarrow$ (c) follows by remarking that $M(CC^T)^{-1}M^T=CC^T$ if and only if
 $(C^{-1} M C^{-T})(C^{-1} M C^{-T})^T=I_n$  if and only if $C^{-1} M C^{-T} \in \mathcal{O}_n$.

We get (d) $\Rightarrow$ (c) from Remark \ref{dec-pol-norm}, while (c) $\Rightarrow$ (d) is trivial.

Now note that, if $H \in GL_n$, we have:
$(HMH^T)(HM^T H^T) = (HM^T H^T)(HMH^T)$ if and only if 
$(M^{-T} M) H^T H (M^{-T} M)^T=  H^TH$ if and only if $\Gamma_{M^{-T} M}$ has $H^TH$ as fixed point in $\mathcal{P}_n$. Since $M^{-T}M$ and $MM^{-T}$ are similar, from Proposition \ref{class-ellitt-Gamma}, we get the equivalence (d) $\Leftrightarrow$ (e).

Finally (e) $\Leftrightarrow$ (f)  $\Leftrightarrow$ (g) is proved in Proposition \ref{class-ellitt-Gamma}.
\end{proof}

\begin{lemma}\label{Fix-Gamma-j}
Let $M \in GL_n$ and assume that $\Gamma_M \circ j$ is elliptic.  Then

$Fix(\Gamma_M \circ j) \subseteq Fix(\Gamma_{MM^{-T}})$.
\end{lemma}

\begin{proof}
If $P \in Fix(\Gamma_M \circ j)$, then $MP^{-1}M^T=P$, i.e. $M^{-T}PM^{-1} = P^{-1}$. Hence we have: $MM^{-T} P (MM^{-T})^T = M(M^{-T} P M^{-1}) M^T = MP^{-1}M^T = P$ and so $P \in Fix(\Gamma_{MM^{-T}})$.
\end{proof}

\begin{rems}\label{ort-norm}
a) Let $M \in GL_n$, then $M$ is normal if and only if $MM^{-T}$ is orthogonal.

b)  Let $M, S \in GL_n$. Then $S^{-1}M S^{-T}$ is normal if and only if $S^{-1} MM^{-T}S$ is orthogonal.

Indeed $M^T M = M M^T$ if and only if $(MM^{-T})^T(MM^{-T}) = I_n$ and this gives (a).

Part (b) follows from (a), since $S^{-1} M M^{-T} S = (S^{-1} M S^{-T})(S^{-1} M S^{-T})^{-T}$.
\end{rems}

\begin{remsdefs}\label{S-Q}
Let $M \in GL_n$, assume that $\Gamma_M \circ j$ is elliptic or, equivalently, that $M$ is $\mathbb{R}$-congruent to an orthogonal matrix.

a) By Proposition \ref{class-ellitt-Gamma-j}, we denote by $S$ a matrix in $GL_n$ such that $S^{-1} M M^{-T} S \in \mathcal{O}_n$. 
By Remarks \ref{ort-norm} (b), $S^{-1} M S^{-T}$ is normal and so, by Theorem \ref{polar-dec} and Remark \ref{dec-pol-norm}, $S^{-1} M S^{-T} = QU = UQ = \sqrt{Q}\, U \sqrt{Q}$ where $Q \in \mathcal{P}_n$ and $U \in \mathcal{O}_n$ are the components of the polar decomposition of $S^{-1} M S^{-T}$. This gives also that $M$ is $\mathbb{R}$-congruent to $U$.

Conversely, it is easy to verify that every $U \in \mathcal{O}_n$, which is $\mathbb{R}$-congruent to $M$, can be obtained in this way, starting from a matrix $S \in GL_n$ such that  $S^{-1} M M^{-T} S$ is orthogonal.

Fixed a matrix $S$ as above, the explicit expression for $U$ is the following:

$U= (\sqrt{S^{-1}M S^{-T} S^{-1}M^T S^{-T}}\,)^{-1} S^{-1}M S^{-T}$.

In particular, if $M$ is real normal, then we can choose 
$S=I_n$, so that $U= (\sqrt{MM^T})^{-1}M$. 

b) Following Remark-Definition \ref{J-tilda}, we denote by $Z \in \mathcal{O}_n$ a matrix such that $U= Z \widetilde{J}_U Z^T$ and, so, for $R= S \sqrt{Q}Z$ we get: $M= R \widetilde{J}_U R^T$.
\end{remsdefs}

\begin{prop}\label{Fix-Gamma-j-matrici}
Let $M \in GL_n$, assume that $\Gamma_M \circ j$ is elliptic, and let $U$ be an orthogonal matrix, $\mathbb{R}$-congruent to $M$, and $R \in GL_n$ such that $M= R \widetilde{J}_U R^T$
(remember Remarks-Definitions \ref{S-Q}).
 Then
\begin{center}
$Fix(\Gamma_M \circ j) = \Gamma_R(\{ GG^T  : G \in \mathcal{C}_{J_{MM^{-T}}}, GG^T \in \mathcal{K}_ {\widetilde{J}_U}\})
$.
\end{center}
\end{prop}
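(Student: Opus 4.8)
The plan is to reduce the statement to Proposition~\ref{Fix-Gamma} and Lemma~\ref{Fix-Gamma-j} by conjugating everything by $\Gamma_{R^{-1}}$. First I would observe that $P \in Fix(\Gamma_M \circ j)$ means $MP^{-1}M^T = P$, and substituting $M = R\widetilde J_U R^T$ and setting $P = \Gamma_R(P') = RP'R^T$ turns this into $\widetilde J_U (P')^{-1} \widetilde J_U^{\,T} = P'$, i.e. $P' \in Fix(\Gamma_{\widetilde J_U} \circ j)$. So it suffices to identify $\Gamma_{R^{-1}}(Fix(\Gamma_M \circ j)) = Fix(\Gamma_{\widetilde J_U} \circ j)$ with $\{GG^T : G \in \mathcal{C}_{J_{MM^{-T}}},\ GG^T \in \mathcal{K}_{\widetilde J_U}\}$, noting that the objects $\mathcal{C}_{J_{MM^{-T}}}$ and $\mathcal{K}_{\widetilde J_U}$ are conjugation-invariant in the right way: since $M$ and $\widetilde J_U$ are $\mathbb{R}$-congruent via $R$ and $\widetilde J_U \in \mathcal{O}_n$, one has $\widetilde J_U \widetilde J_U^{-T} = \widetilde J_U^{\,2} = J_{\widetilde J_U^{\,2}} = J_{U^2}$, and $MM^{-T}$ is similar to $\widetilde J_U \widetilde J_U^{-T}$; so $J_{MM^{-T}} = J_{U^2} = \widetilde J_U^{\,2}$, which is why $\mathcal{C}_{J_{MM^{-T}}}$ appears unchanged on both sides.

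Next I would prove the two inclusions for $Fix(\Gamma_{\widetilde J_U} \circ j)$. For a fixed point $P'$: by Lemma~\ref{Fix-Gamma-j}, $P' \in Fix(\Gamma_{\widetilde J_U \widetilde J_U^{-T}}) = Fix(\Gamma_{J_{MM^{-T}}})$, and since $J_{MM^{-T}}$ is its own RJS form, Proposition~\ref{Fix-Gamma}(ii) applied with $F_0 = I_n$ gives $P' = GG^T$ for some $G \in \mathcal{C}_{J_{MM^{-T}}}$. It remains to check $P' = GG^T \in \mathcal{K}_{\widetilde J_U}$, i.e. $GG^T \widetilde J_U (GG^T)^T = \widetilde J_U$: this should follow by combining $\widetilde J_U (P')^{-1}\widetilde J_U^{\,T} = P'$ (the fixed-point condition, rewritten as $(P')^{-1} = \widetilde J_U^{-1} P' \widetilde J_U^{-T}$ using $\widetilde J_U^{-1} = \widetilde J_U^{\,T}$) with the commutation $G J_{MM^{-T}} = J_{MM^{-T}} G$, i.e. $G\widetilde J_U^{\,2} = \widetilde J_U^{\,2} G$ — a short direct manipulation. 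Conversely, given $G \in \mathcal{C}_{J_{MM^{-T}}}$ with $GG^T \in \mathcal{K}_{\widetilde J_U}$, the relation $GG^T\widetilde J_U (GG^T)^T = \widetilde J_U$ rearranges (using $\widetilde J_U^{\,T} = \widetilde J_U^{-1}$) to $\widetilde J_U (GG^T)^{-1} \widetilde J_U^{\,T} = GG^T$, which is exactly the statement that $GG^T \in Fix(\Gamma_{\widetilde J_U} \circ j)$.

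Finally I would transport back: applying $\Gamma_R$ to the identity $Fix(\Gamma_{\widetilde J_U} \circ j) = \{GG^T : G \in \mathcal{C}_{J_{MM^{-T}}},\ GG^T \in \mathcal{K}_{\widetilde J_U}\}$ yields the claimed formula, since $\Gamma_{R^{-1}}(Fix(\Gamma_M \circ j)) = Fix(\Gamma_{\widetilde J_U} \circ j)$ by the first step.

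I expect the main obstacle to be the bookkeeping in the forward inclusion: one must see clearly why an arbitrary $G$ furnished by Proposition~\ref{Fix-Gamma}(ii) (which only encodes $GG^T \in Fix(\Gamma_{J_{MM^{-T}}})$) automatically also satisfies the stronger congruence condition $GG^T \in \mathcal{K}_{\widetilde J_U}$ when $P' = GG^T$ is genuinely fixed by $\Gamma_{\widetilde J_U}\circ j$ and not merely by $\Gamma_{\widetilde J_U^{\,2}}$. This is where the precise interplay between $\widetilde J_U^{\,2} = J_{MM^{-T}}$, the orthogonality $\widetilde J_U^{-1} = \widetilde J_U^{\,T}$, and the commutation of $G$ with $\widetilde J_U^{\,2}$ has to be used; one should be careful that $G$ need not commute with $\widetilde J_U$ itself, only with its square, so the argument must genuinely exploit the fixed-point equation rather than pushing $\widetilde J_U$ through $G$.
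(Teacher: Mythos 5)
Your proof is correct and takes essentially the same route as the paper's: both rely on Lemma \ref{Fix-Gamma-j}, on Proposition \ref{Fix-Gamma}(ii) together with the identity $J_{MM^{-T}}=J_{U^2}=(\widetilde J_U)^2$, and on the orthogonality of $\widetilde J_U$ to translate the fixed-point equation into $GG^T\in\mathcal{K}_{\widetilde J_U}$; your preliminary normalization by $\Gamma_{R^{-1}}$ (the paper instead takes $F_0=R$ and carries $R$ through the computation) is only a cosmetic reorganization. One small simplification: in your forward inclusion the commutation $GJ_{MM^{-T}}=J_{MM^{-T}}G$ is not actually needed, since for $S=GG^T$ symmetric and $\widetilde J_U^{-1}=\widetilde J_U^{\,T}$ the equivalence $\widetilde J_U S^{-1}\widetilde J_U^{\,T}=S \Leftrightarrow S\widetilde J_U S^{T}=\widetilde J_U$ is immediate.
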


\begin{proof}
By Lemma \ref{Fix-Gamma-j}, $Fix(\Gamma_M \circ j) \subseteq Fix(\Gamma_{MM^{-T}})$.

Now $MM^{-T} = R \widetilde{J}_U R^T R^{-T} \widetilde{J}_U R^{-1} = R (\widetilde{J}_U)^2 R^{-1} = R J_{U^2} R^{-1} = R J_{MM^{-T}} R^{-1}$. By Proposition \ref{Fix-Gamma} (ii), if $P \in Fix(\Gamma_M \circ j)$, then there exists $G \in \mathcal{C}_{J_{MM^{-T}}}$ such that $RGG^TR^T=P$. Moreover $MP^{-1}M^T = P$ is equivalent to $\widetilde{J}_U = (GG^T) \widetilde{J}_U (GG^T)^T$, i.e. to $GG^T \in \mathcal{K}_ {\widetilde{J}_U}$; hence $Fix(\Gamma_M \circ j) \subseteq \{ R GG^T R^T : G \in \mathcal{C}_{J_{MM^{-T}}}, GG^T \in \mathcal{K}_ {\widetilde{J}_U}\}= \Gamma_R(\{ GG^T  : G \in \mathcal{C}_{J_{MM^{-T}}}, GG^T \in \mathcal{K}_ {\widetilde{J}_U}\})$.

For the other inclusion, let $P= RGG^TR^T$ with $G \in \mathcal{C}_{J_{MM^{-T}}}$ and $GG^T \in \mathcal{K}_ {\widetilde{J}_U}$. Then we have:
$M P^{-1} M^T = R \widetilde{J}_U (GG^T)^{-1} (\widetilde{J}_U)^T R^T = R GG^T \widetilde{J}_U (\widetilde{J}_U)^T R^T =P$ and then $P \in Fix(\Gamma_M \circ j)$ and the Proposition is proved.
\end{proof}

\begin{example}\label{Omega}
Now let $M= \Omega_p:= I_p\oplus (-I_{n-p})$ with $p = 0, \cdots , n$. 

Note that $\Omega_p$ is diagonal, orthogonal, $\Omega_p^2 = I_n$ and that $\Omega_p$ agrees with its RJA form $\widetilde{J}_{\Omega_p}$. Hence, by Proposition \ref{class-ellitt-Gamma-j}, $\Gamma_{\Omega_p} \circ j$ is elliptic  and with the same notations of Remarks-Definitions \ref{S-Q}, we can choose $S= Q = Z = I_n$ and $U = \Omega_p$.

Hence, by Proposition \ref{Fix-Gamma-j-matrici}, $Fix(\Gamma_{\Omega_p} \circ j) = \mathcal{P}_n \cap \mathcal{O}(p, n-p) = \mathcal{P}_n \cap S\mathcal{O}_0(p, n-p) $.

Moreover, $S\mathcal{O}_0(p, n-p)$ is a reductive subgroup of $GL_n$, satisfying the condition (*) in Proposition \ref{Gruppi-alg-riduttivi} and we have also $S\mathcal{O}_0(p, n-p) \cap S\mathcal{O}_n = S\mathcal{O}_p \oplus S\mathcal{O}_{n-p}$ since the inclusion $S\mathcal{O}_0(p, n-p) \cap S\mathcal{O}_n \supseteq S\mathcal{O}_p \oplus S\mathcal{O}_{n-p}$ is trivial, while it is well-known that $S\mathcal{O}_p \oplus S\mathcal{O}_{n-p}$ is a maximal compact subgroup of $S\mathcal{O}_0(p, n-p)$ (see for instance \cite[Ch.\,VI]{Helg2001}). By Proposition \ref{Gruppi-alg-riduttivi}, this allows to conclude that $Fix(\Gamma_{\Omega_p} \circ j)= \mathcal{P}_n \cap S\mathcal{O}_0(p, n-p)$ is diffeomorphic to the irreducible symmetric space $S\mathcal{O}_0(p, n-p)/\big( S\mathcal{O}_p \oplus S\mathcal{O}_{n-p}\big)$, whose dimension is $p(n-p)$.
\end{example}

\begin{example}\label{Lambda}
Consider the case: $n= 2m$ and $M = \Lambda_m := E^{\oplus m}$. 

Arguing  as in Example \ref{Omega}, we get that $\Lambda_m$ is orthogonal, skew-symmetric, $\Lambda_m^2 =-I_{2m}$ and $\Lambda_m$ agrees with its RJA form $\widetilde{J}_{\Lambda_m}$: $\Lambda_m= \widetilde{J}_{\Lambda_m}$. Hence  $\Gamma_{\Lambda_m} \circ j$ is elliptic and we can get: $S=Q=Z=I_{2m}$ and $U = \Lambda_m$.

Therefore $Fix(\Gamma_{\Lambda_m} \circ j) =\{GG^T : GG^T \in \mathcal{K}_{\Lambda_m} \}$.

Now let $W$ be an orthogonal matrix such that
$\Lambda_m= W
\begin{pmatrix} 
 0 & I_m \\ 
-I_m &  0
\end{pmatrix}
W^T
$. 

It is easy to verify that $\mathcal{K}_{\Lambda_m} = \Gamma_W(Sp_{2m})$. Then we get

$Fix(\Gamma_{\Lambda_m} \circ j) = \mathcal{P}_{2m} \cap \mathcal{K}_{\Lambda_m} = \mathcal{P}_{2m} \cap \Gamma_W(Sp_{2m}) = \Gamma_W(\mathcal{P}_{2m} \cap Sp_{2m})$.

Moreover, since $Sp_{2m}$ is an algebraic reductive subgroup of $GL_{2m}$ and $Sp_{2m} \cap \mathcal{O}_{2m}=\rho(U_{m})$ (see for instance \cite[Prop. 2.12, p. 33]{deGoss2006}), by Proposition \ref{Gruppi-alg-riduttivi} we get that $Fix(\Gamma_{\Lambda_m} \circ j)= \Gamma_W(\mathcal{P}_{2m} \cap Sp_{2m})$ is diffeomorphic to the irreducible symmetric space $Sp_{2m}/\rho(U_{m})$, whose dimension is $m(m+1)$.
\end{example}

\begin{example}\label{Theta}
Now let $\theta \in (0, \dfrac{\pi}{2})$, $\mu \ge 0$, $\nu \ge 0$, $\mu  +\nu \ge 1$ and $n= 2(\mu  +\nu)$.
Let $M= \Theta_{\theta; \mu, \nu} := E_\theta^{\oplus \mu} \oplus (-E_\theta^{\oplus \nu})$.

Once again arguing as in Example \ref{Omega}, we get that $\Theta_{\theta; \mu, \nu}$ is orthogonal, skew-symmetric, $\Theta_{\theta; \mu, \nu}^2 = E_{2 \theta}^{\mu+\nu}$ and $\Theta_{\theta; \mu, \nu}$ agrees with its RJA form  $\widetilde{J}_{\Theta_{\theta; \mu, \nu}}$. Hence $\Gamma_{\Theta_{\theta; \mu, \nu}} \circ j$ is elliptic and we can choose $S=Q=Z=I_{2(\mu+\nu)}$, $U = \Theta_{\theta; \mu, \nu}$ so that

$Fix(\Gamma_{\Theta_{\theta; \mu, \nu}} \circ j) = \{ GG^T : G \in \mathcal{C}_{E_{2 \theta}^{\mu+\nu}}, GG^T \in \mathcal{K}_{\Theta_{\theta; \mu, \nu}} \}$.

By Lemma \ref{lemma-su-J}, $\mathcal{C}_{E_{2 \theta}^{\mu+\nu}}= \rho(GL_n(\mathbb{C}))$, where $\rho: GL_{\mu+\nu}(\mathbb{C}) \to GL_{2(\mu+\nu)}$ is the monomorhism defined in Remark \ref{rho}.
Note that $\Theta_{\theta; \mu, \nu}= \rho(e^{i\theta}(I_\mu \oplus (-I_\nu)))$. 

Since $\rho$ preserves products and it is injective, we get that 
$Fix(\Gamma_{\Theta_{\theta; \mu, \nu}} \circ j) =\\
=\rho\big(\{HH^* : H\in GL_{\mu+\nu}(\mathbb{C}), HH^* (e^{i \theta}(I_\mu \oplus (-I_\nu))) HH^* = 
e^{i \theta}(I_\mu \oplus (-I_\nu))\} \big)=\\
\rho \big(\{HH^* : H\in GL_{\mu+\nu}(\mathbb{C}), HH^* (I_\mu \oplus (-I_\nu))) HH^* = 
(I_\mu \oplus (-I_\nu))\} \big)=\\
=\rho(\mathcal{H}_{\mu+\nu} \cap U(\mu, \nu)) = \rho(\mathcal{H}_{\mu+\nu} \cap SU(\mu, \nu))$.

Now $\rho(\mathcal{H}_{\mu+\nu} \cap SU(\mu, \nu)) = \mathcal{P}_{2(\mu+\nu)} \cap \rho(SU(\mu, \nu))$ is an algebraic reductive subgroup of $GL_n$ and, arguing as in Example \ref{Omega}, $\rho(SU(\mu, \nu)) \cap\mathcal{O}_{2 (\mu+\nu)} = \rho(S(U_\mu \oplus U_\nu))$ (where $S(U_\mu \oplus U_\nu)$ consists in matrices of $U_\mu \oplus U_\nu$ with determinant $1$); indeed $\rho(S(U_\mu \oplus U_\nu))$ is trivially included in $\rho(SU(\mu, \nu)) \cap\mathcal{O}_{2 (\mu+\nu)}$ and $S(U_\mu \oplus U_\nu)$ is a maximal compact subgroup of $SU(\mu, \nu)$.
By Proposition \ref{Gruppi-alg-riduttivi}, we obtain that $Fix(\Gamma_{\Theta_{\theta; \mu, \nu}} \circ j)= \mathcal{P}_{2(\mu+\nu)} \cap \rho(SU(\mu, \nu)) = \rho(\mathcal{H}_{\mu+\nu} \cap SU(\mu, \nu))$ is diffeomorphic to the irreducible symmetric space  $SU(\mu, \nu) / S(U_\mu \oplus U_\nu)$, whose dimension is $2\mu\nu$.
\end{example}

\begin{prop}\label{Gamma-j-Riem-prod}
Let $M \in GL_n$, assume that $\Gamma_M \circ j$ is elliptic, and let $U$ be an orthogonal matrix, $\mathbb{R}$-congruent to $M$
(remember Remarks-Definitions \ref{S-Q}). 

Let $
\widetilde{J}_U = I_p \oplus (- I_q) \oplus E_{\phi_1}^{\oplus \mu_1} \oplus (- E_{\phi_1 }^{\oplus \nu_1}) \oplus \cdots \oplus E_{\phi_h}^{\oplus \mu_h} \oplus (- E_{\phi_h}^{\oplus \nu_h}) \oplus E^{\oplus k}
$ the RJA form of $U$. 
Then $Fix(\Gamma_M \circ j)$ and

$(\mathcal{P}_{p+q} \cap S\mathcal{O}_0(p,q)) \oplus 
\rho(\mathcal{H}_{\mu_1 + \nu_1} \cap U(\mu_1, \nu_1))\oplus  \cdots \oplus \rho(\mathcal{H}_{\mu_h + \nu_h} \cap U(\mu_h, \nu_h))
\oplus (\mathcal{P}_{2k} \cap Sp_{2k})$ are  isometric (by congruence) as Riemannian submanifolds of $(\mathcal{P}_n, g)$.

In particular $(Fix(\Gamma_M \circ j), g)$ is a closed simply connected totally geodesic symmetric Riemannian submanifold of $(\mathcal{P}_n, g)$ of dimension $pq + 2\sum_{j=1}^h \mu_j\nu_j + k(k+1)$, isometric to the Riemannian product
\begin{center}
$(\mathcal{P}_{p+q} \cap S\mathcal{O}_0(p,q), g) \times \, \prod_{j=1}^h (\mathcal{H}_{\mu_j + \nu_j} \cap U(\mu_j, \nu_j), 2\gamma) \, \times (\mathcal{P}_{2k} \cap Sp_{2k}, g)$.
\end{center}
\end{prop}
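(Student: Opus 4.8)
The plan is to reduce the general case to the three model examples (\ref{Omega}, \ref{Lambda}, \ref{Theta}) which have already been treated, using the block structure of the RJA form $\widetilde{J}_U$. By Proposition \ref{Fix-Gamma-j-matrici} we have $Fix(\Gamma_M \circ j) = \Gamma_R\big(\{GG^T : G \in \mathcal{C}_{J_{MM^{-T}}},\, GG^T \in \mathcal{K}_{\widetilde{J}_U}\}\big)$, so since $\Gamma_R$ is an isometry of $(\mathcal{P}_n,g)$ it suffices to identify the set $\mathcal{F}:=\{GG^T : G \in \mathcal{C}_{J_{MM^{-T}}},\, GG^T \in \mathcal{K}_{\widetilde{J}_U}\}$ and show it is the stated block direct sum. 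First I would apply Lemma \ref{lemma-su-J} to $J_{MM^{-T}} = (\widetilde{J}_U)^2$; since squaring $\widetilde{J}_U$ turns the $I_p \oplus (-I_q)$ block into $I_{p+q}$, each pair $E_{\phi_j}^{\oplus\mu_j}\oplus(-E_{\phi_j}^{\oplus\nu_j})$ into $E_{2\phi_j}^{\oplus(\mu_j+\nu_j)}$, and $E^{\oplus k}$ into $-I_{2k}$, the group $\mathcal{C}_{J_{MM^{-T}}}$ decomposes as $GL_{p+q} \oplus \rho(GL_{\mu_1+\nu_1}(\mathbb{C})) \oplus \cdots \oplus \rho(GL_{\mu_h+\nu_h}(\mathbb{C})) \oplus GL_{2k}$ — note the first and last blocks are ordinary (real) general linear groups because the corresponding eigenvalues of $(\widetilde J_U)^2$ are $+1$ and $-1$ respectively, with no further $\rho$-structure, except that the $-1$ block of size $2k$ sits inside $\mathcal{C}_{-I_{2k}}=GL_{2k}$.

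Next, since the $GL_{p+q}$ factor sees a genuinely block-diagonal ambient form, matrices $GG^T$ with $G$ in this factor range over all of $\mathcal{P}_{p+q}$, and the constraint $GG^T \in \mathcal{K}_{\widetilde J_U}$ restricted to this block is exactly $GG^T \in \mathcal{K}_{I_p\oplus(-I_q)} = \mathcal{O}(p,q)$, so that block contributes $\mathcal{P}_{p+q}\cap\mathcal{O}(p,q) = \mathcal{P}_{p+q}\cap S\mathcal{O}_0(p,q)$ as in Example \ref{Omega}. On the $\rho(GL_{\mu_j+\nu_j}(\mathbb{C}))$ factor, writing $G = \rho(H)$ the condition $GG^T\in\mathcal{K}_{E_{\phi_j}^{\oplus\mu_j}\oplus(-E_{\phi_j}^{\oplus\nu_j})}$ becomes, via $\rho(e^{i\phi_j}(I_{\mu_j}\oplus(-I_{\nu_j})))$ and injectivity of $\rho$, the condition $HH^*(I_{\mu_j}\oplus(-I_{\nu_j}))HH^* = I_{\mu_j}\oplus(-I_{\nu_j})$, i.e. $HH^*\in U(\mu_j,\nu_j)$, exactly as in Example \ref{Theta}; this block contributes $\rho(\mathcal{H}_{\mu_j+\nu_j}\cap U(\mu_j,\nu_j))$. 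On the $GL_{2k}$ factor the ambient constraint is $GG^T\in\mathcal{K}_{E^{\oplus k}}$, which by the argument of Example \ref{Lambda} gives $\mathcal{P}_{2k}\cap Sp_{2k}$. The key point making the whole set split as a direct sum is that $\mathcal{C}_{J_{MM^{-T}}}$ is already block diagonal along the partition $(p+q, 2(\mu_1+\nu_1),\dots,2(\mu_h+\nu_h),2k)$ and $\widetilde J_U$ respects the same partition, so both the "$GG^T$" operation and the membership in $\mathcal{K}_{\widetilde J_U}$ are taken blockwise.

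Having identified $\mathcal{F}$ (hence $Fix(\Gamma_M\circ j)$ up to the isometry $\Gamma_R$) as $(\mathcal{P}_{p+q}\cap S\mathcal{O}_0(p,q)) \oplus \bigoplus_{j=1}^h \rho(\mathcal{H}_{\mu_j+\nu_j}\cap U(\mu_j,\nu_j)) \oplus (\mathcal{P}_{2k}\cap Sp_{2k})$, the Riemannian product decomposition follows because the trace metric $g$ on a block-diagonal submanifold of $\mathcal{P}_n$ is the product of the induced metrics on the blocks (and the factor on $\rho(\mathcal{H}_{\mu_j+\nu_j})$ carries $2\gamma$ by the computation in Remark \ref{rho}); the dimension count $pq + 2\sum_j\mu_j\nu_j + k(k+1)$ is the sum of the dimensions $p(n-p)\to pq$, $2\mu_j\nu_j$, $m(m+1)\to k(k+1)$ recorded in Examples \ref{Omega}, \ref{Theta}, \ref{Lambda}. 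Finally the assertion that $(Fix(\Gamma_M\circ j),g)$ is a closed simply connected totally geodesic symmetric Hadamard submanifold is immediate from Theorem \ref{Fix-Riem-manifold}. The main obstacle I anticipate is bookkeeping: carefully justifying that the cross-blocks of $G$ (between the $p+q$ part, the complex parts, and the $2k$ part) are forced to vanish — this is where one must invoke Lemma \ref{lemma-su-J} precisely, noting that the eigenvalues $1$, $e^{\pm 2i\phi_j}$, $-1$ of $(\widetilde J_U)^2$ are pairwise distinct — and then checking that the $\mathcal{K}_{\widetilde J_U}$-constraint, which a priori could couple blocks with the same value of $\phi_j$, in fact does not, because distinct $\phi_j$ give distinct $E_{2\phi_j}$ already at the $\mathcal{C}$-level, while the sign ambiguity $E_{\phi_j}$ versus $-E_{\phi_j}$ is absorbed into the $U(\mu_j,\nu_j)$ signature.
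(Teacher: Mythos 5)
Your proposal is correct and follows essentially the same route as the paper: apply Proposition \ref{Fix-Gamma-j-matrici}, use Lemma \ref{lemma-su-J} on $J_{MM^{-T}}=(\widetilde{J}_U)^2$ to get the block-diagonal description of $\mathcal{C}_{J_{MM^{-T}}}$, and then impose the $\mathcal{K}_{\widetilde{J}_U}$-condition blockwise exactly as in Examples \ref{Omega}, \ref{Lambda} and \ref{Theta}. The only cosmetic difference is that you spell out the blockwise bookkeeping (which the paper compresses into ``arguing as in the Examples''), and, as in Example \ref{Lambda}, the $E^{\oplus k}$ block strictly yields $\Gamma_W(\mathcal{P}_{2k}\cap Sp_{2k})$ for an orthogonal $W$, which is harmless since the statement is only up to isometry by congruence.
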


\begin{proof}
We have: $J_{MM^{-T}} = J_{U^2} = (\widetilde{J}_U)^2 =\\
I_{p+q} \oplus E_{2 \phi_1}^{\oplus(\mu_1+\nu_1)} \oplus \cdots \oplus E_{2 \phi_h}^{\oplus(\mu_h+\nu_h)} \oplus (-I_{2k})$, 
hence, by Lemma \ref{lemma-su-J}, 

$\mathcal{C}_{J_{MM^{-T}}} = GL_{p+q} \oplus \rho(GL_{\mu_1+\nu_1}(\mathbb{C})) \oplus 	\cdots \oplus \rho(GL_{\mu_h+\nu_h}(\mathbb{C})) \oplus GL_{2k}$.

Therefore: $\{ GG^T : G\in \mathcal{C}_{J_{MM^{-T}}} \}= \mathcal{P}_{p+q} \oplus \rho(\mathcal{H}_{\mu_1+\nu_1}) \oplus \cdots \oplus \rho(\mathcal{H}_{\mu_h+\nu_h}) \oplus \mathcal{P}_{2k}$.

Now, arguing as in Examples \ref{Omega},
 \ref{Lambda} and  \ref{Theta},
we get 

$\{ GG^T : G\in \mathcal{C}_{J_{MM^{-T}}} \} \cap \mathcal{K}_{\widetilde{J}_U} = \\
(\mathcal{P}_{p+q} \cap S\mathcal{O}_0(p,q)) \oplus 
\rho(\mathcal{H}_{\mu_1 + \nu_1} \cap U(\mu_1, \nu_1))\oplus  \cdots \oplus \rho(\mathcal{H}_{\mu_h + \nu_h} \cap U(\mu_h, \nu_h)) \oplus \\
\oplus \Gamma_W (\mathcal{P}_{2k} \cap Sp_{2k})
$. 

We conclude by Proposition \ref{Fix-Gamma-j-matrici}.
\end{proof}

\begin{prop}\label{DeRham-Gamma-j}
Let $M \in GL_n$, assume that $\Gamma_M \circ j$ is elliptic and let $U$ be an orthogonal matrix, $\mathbb{R}$-congruent to $M$, having $1$ as eigenvalue with multiplicity $ p \ge0$, $-1$ as eigenvalue with multiplicity $ q \ge0$, $i$ as eigenvalue with multiplicity $k \ge0$ and call $\phi_1, \cdots , \phi_h$ ($h \ge 0$), the set of mutually distinct possible values in $(0, \dfrac{\pi}{2})$ such that $e^{i \phi}$ or $e^{i(\pi-\phi)}$ is an eigenvalue of $U$.

Moreover, if $h > 0$, for every $j = 1, \cdots, h$, we set to be $\mu_j$ the multiplicity of $e^{i \phi_j}$ and $\nu_j$  to be the multiplicity of $e^{i (\pi -\phi_j)}$ (note that $\mu_j \ge 0$, $\nu_j \ge 0$, $\mu_j + \nu_j \ge 1$ and $p, q, k, h$ are not all zero).

Then, up to isometries,  the De Rham factors of $\left( Fix(\Gamma_M \circ j), g \right)$ are:

a) $\mathbb{R}$, if $p=q=1$;

b) $ S\mathcal{O}_0(p, q)/ ( S\mathcal{O}_p \oplus S\mathcal{O}_q )$, if $p, q \ge 1$ and $p+q \ge 3$;

c) $Sp_{2k}/\rho(U_{k})$, if $k \ge 1$;

d) $SU(\mu_j, \nu_j) / S(U_{\mu_j}  \oplus  U_{\nu_j})$, if $h \ge 1$ for every $j = 1, \cdots , h$ such that $\mu_j, \nu_j \ge 1$.
\end{prop}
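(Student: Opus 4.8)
The strategy is to start from the explicit Riemannian product decomposition of $(Fix(\Gamma_M\circ j),g)$ established in Proposition \ref{Gamma-j-Riem-prod} and to compute the De Rham decomposition of each factor separately, using that the De Rham decomposition of a Hadamard manifold is unique and that the decomposition of a Riemannian product is the (unordered) juxtaposition of those of its factors. Concretely, writing the RJA form of $U$ with the multiplicities $p,q,k$ and the data $h,\mu_1,\nu_1,\dots,\mu_h,\nu_h$ as in the statement, Proposition \ref{Gamma-j-Riem-prod} tells us that $(Fix(\Gamma_M\circ j),g)$ is isometric to
$$(\mathcal{P}_{p+q}\cap S\mathcal{O}_0(p,q),g)\times\prod_{j=1}^{h}(\mathcal{H}_{\mu_j+\nu_j}\cap U(\mu_j,\nu_j),2\gamma)\times(\mathcal{P}_{2k}\cap Sp_{2k},g),$$
so it suffices to treat the three types of factor.

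The next step is to identify each factor, via Examples \ref{Omega}, \ref{Lambda} and \ref{Theta} (which in turn rest on Proposition \ref{Gruppi-alg-riduttivi}), with a globally symmetric space $G/K$ of non-compact type of the indicated form: $(\mathcal{P}_{p+q}\cap S\mathcal{O}_0(p,q),g)$ with $S\mathcal{O}_0(p,q)/(S\mathcal{O}_p\oplus S\mathcal{O}_q)$, each $(\mathcal{H}_{\mu_j+\nu_j}\cap U(\mu_j,\nu_j),2\gamma)$ with $SU(\mu_j,\nu_j)/S(U_{\mu_j}\oplus U_{\nu_j})$, and $(\mathcal{P}_{2k}\cap Sp_{2k},g)$ with $Sp_{2k}/\rho(U_k)$. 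In each case $G$ is semisimple, so the corresponding factor carries no Euclidean De Rham factor; whether it is irreducible or splits further is then decided by whether its isometry Lie algebra is simple, which I would read off from the classification of irreducible Riemannian symmetric spaces already cited (e.g.\ \cite[pp.\,311--317]{Bes1987}, \cite[pp.\,306--308]{BerConOl2003}).

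The remaining work is the bookkeeping of the degenerate low-rank cases. Here $SU(\mu_j,\nu_j)/S(U_{\mu_j}\oplus U_{\nu_j})$ is a point when $\mu_j\nu_j=0$ and irreducible when $\mu_j,\nu_j\ge1$; $Sp_{2k}/\rho(U_k)$ is a point for $k=0$ and irreducible for $k\ge1$; and $S\mathcal{O}_0(p,q)/(S\mathcal{O}_p\oplus S\mathcal{O}_q)$ is a point when $pq=0$, is isometric to $\mathbb{R}$ when $p=q=1$ (as $S\mathcal{O}_0(1,1)\cong\mathbb{R}$ while $S\mathcal{O}_1$ is trivial), and is irreducible when $p,q\ge1$ and $p+q\ge3$ (as in Example \ref{Omega}; the only accidental further splitting is the well-known $\mathfrak{so}(2,2)\cong\mathfrak{sl}(2,\mathbb{R})\oplus\mathfrak{sl}(2,\mathbb{R})$). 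Assembling these: a Euclidean factor $\mathbb{R}$ appears exactly when $p=q=1$, no other factor contributes an abelian piece, and the irreducible non-flat factors are precisely those in (b), (c), (d); uniqueness of the De Rham decomposition then gives the stated list. I expect the only genuine obstacle to be this case analysis of the boundary values of $p,q,k,\mu_j,\nu_j$, since everything else is a direct application of Proposition \ref{Gamma-j-Riem-prod} and the classification.
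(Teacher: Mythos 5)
Your proposal follows essentially the same route as the paper's proof: it starts from the Riemannian product decomposition of Proposition \ref{Gamma-j-Riem-prod}, identifies the factors with the symmetric spaces $S\mathcal{O}_0(p,q)/(S\mathcal{O}_p\oplus S\mathcal{O}_q)$, $SU(\mu_j,\nu_j)/S(U_{\mu_j}\oplus U_{\nu_j})$ and $Sp_{2k}/\rho(U_k)$ via Examples \ref{Omega}, \ref{Lambda} and \ref{Theta}, and then does the low-rank bookkeeping in the spirit of Remark \ref{DeRham-casi}, exactly as the paper does. The only place you go beyond the paper is your parenthetical on $\mathfrak{so}(2,2)\cong\mathfrak{sl}(2,\mathbb{R})\oplus\mathfrak{sl}(2,\mathbb{R})$, which correctly flags that for $p=q=2$ the factor in (b) is not irreducible but splits into two hyperbolic planes --- a subtlety the paper's statement and proof pass over silently; apart from reconciling that aside with your blanket irreducibility claim, your case analysis matches theirs.
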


\begin{proof}
It follows  from Proposition \ref{Gamma-j-Riem-prod} and from Examples \ref{Omega}, \ref{Lambda} and \ref{Theta}, by analogous arguments, developed in Remark \ref{DeRham-casi}.
\end{proof}

\section{The fixed points of the isometries $\Gamma_M \circ j \circ \delta$}

\begin{prop}\label{class-ellitt-Gamma-j-chi}
Let $M \in GL_n$ and let us consider the isometry $\Gamma_M \circ j \circ \delta$ of $\mathcal{P}_n$.
The following facts are equivalent:

a) $\Gamma_M \circ j \circ \delta$ is elliptic;

b) $\Gamma_M \circ j$ is elliptic and $det(M)= \pm 1$;

c) $M$ and $M^{-T}$ are $\mathbb{R}$-congruent via a positive definite matrix and $det(M)= \pm 1$;

d) $M$ is $\mathbb{R}$-congruent to an orthogonal matrix and $det(M)= \pm 1$; 

e) $M$ is $\mathbb{R}$-congruent to a normal matrix and $det(M)= \pm 1$; 

f) $\Gamma_{M M^{-T}}$ is elliptic and $det(M)= \pm 1$;

g) $MM^{-T}$ is semi-simple with eigenvalues of modulus $1$ and $det(M)= \pm 1$.
\end{prop}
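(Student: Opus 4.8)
The plan is to derive the only genuinely new equivalence, (a) $\Leftrightarrow$ (b), directly from the defining fixed-point equation, and then to observe that (b), (c), (d), (e), (f), (g) are nothing but the conditions (a)--(g) of Proposition \ref{class-ellitt-Gamma-j} with the clause ``$det(M) = \pm 1$'' appended verbatim, so that their mutual equivalence is an immediate consequence of that Proposition.

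First I would write the fixed-point condition for $\Phi := \Gamma_M \circ j \circ \delta$. By Proposition \ref{DP-torino}, $\Phi(X) = det(X)^{2/n}\, M X^{-1} M^T$ for $X \in \mathcal{P}_n$ (here $det(X) > 0$). Assuming $\Phi$ elliptic, pick $Q \in Fix(\Phi)$, so that $det(Q)^{2/n} M Q^{-1} M^T = Q$. Taking determinants of these $n \times n$ matrices gives $det(Q)^{2}\, det(M)^2\, det(Q)^{-1} = det(Q)$, hence $det(M)^2 = 1$, i.e. $det(M) = \pm 1$. To see that $\Gamma_M \circ j$ is elliptic as well, I would rescale: with $\lambda := det(Q)^{1/n} > 0$ and $Q' := \lambda^{-1} Q \in SL\mathcal{P}_n$, the identity $\lambda^2 M Q^{-1} M^T = Q$ yields at once $M (Q')^{-1} M^T = Q'$, so $Q' \in Fix(\Gamma_M \circ j)$. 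This proves (a) $\Rightarrow$ (b).

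For (b) $\Rightarrow$ (a), suppose $\Gamma_M \circ j$ is elliptic and $det(M) = \pm 1$, and take $P \in Fix(\Gamma_M \circ j)$. By Remark \ref{det-P-j-chi}, $det(P) = |det(M)| = 1$, hence $\delta(P) = P$ (recall $Fix(\delta) = SL\mathcal{P}_n$, Remark \ref{int-geom-isom}), and therefore $\Phi(P) = (\Gamma_M \circ j)(P) = P$; thus $\Phi$ is elliptic. Finally, the chain (b) $\Leftrightarrow$ (c) $\Leftrightarrow \cdots \Leftrightarrow$ (g) follows by conjoining the equivalences (a) $\Leftrightarrow \cdots \Leftrightarrow$ (g) of Proposition \ref{class-ellitt-Gamma-j} with the unchanged statement ``$det(M) = \pm 1$''.

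I do not anticipate a serious obstacle: the proof is essentially bookkeeping with determinants. The only points requiring minor care are the distinction between the exponent $2/n$ inside $\delta$ and the factor $n$ produced when taking the determinant of an $n \times n$ matrix, and the systematic use of positivity of $det$ on $\mathcal{P}_n$ to divide freely and to extract real $n$-th roots.
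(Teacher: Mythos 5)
Your argument is correct. The only substantive difference from the paper is the choice of bridge: the paper proves (a) $\Leftrightarrow$ (d) directly, writing a fixed point as $P=CC^T$ and rescaling the congruence factor $C$ by $|det(C)|^{1/n}$ to exhibit an orthogonal matrix $\mathbb{R}$-congruent to $M$ (and conversely producing the fixed point $KK^T$ from $M=KUK^T$), whereas you prove (a) $\Leftrightarrow$ (b) by rescaling the fixed point itself, $Q' = det(Q)^{-1/n}Q \in SL\mathcal{P}_n$, and for the converse you invoke Remark \ref{det-P-j-chi} together with $Fix(\delta)=SL\mathcal{P}_n$ to see that any fixed point of $\Gamma_M\circ j$ already lies in $SL\mathcal{P}_n$ and is therefore fixed by $\Gamma_M\circ j\circ\delta$. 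Both routes then reduce the remaining equivalences to Proposition \ref{class-ellitt-Gamma-j} by appending the clause $det(M)=\pm 1$, which is legitimate since (b)--(g) here are exactly conditions (a)--(f) there with that clause adjoined. Your variant is marginally more economical in that it reuses Remark \ref{det-P-j-chi} and the geometric description of $\delta$, and it anticipates the decomposition $Fix(\Gamma_M\circ j\circ\delta)=\mathbb{R}^+\cdot Fix(\Gamma_M\circ j)$ of Proposition \ref{Fix-Gamma-j-chi}(b); the paper's computation is self-contained at the matrix level and directly produces the orthogonal congruence representative used later. Either way the determinant bookkeeping is the same, and there is no gap.
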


\begin{proof}
It suffices to prove the equivalence (a) $\Leftrightarrow$ (d) and the other equivalences will follow from Proposition \ref{class-ellitt-Gamma-j}.

Assume first that $det(P)^{2/n} M P^{-1} M^T =  P$, then we get $det(M)= \pm 1$ simply by computing the determinants. After setting $P=CC^T$, we get: 

$det(CC^T)^{2/n} M (CC^T)^{-1} M^T =  CC^T$, and thus 

$([\dfrac{C}{|det(C)|^{1/n}}]^{-1} M [\dfrac{C}{|det(C)|^{1/n}}]^{-T}])([\dfrac{C}{|det(C)|^{1/n}}]^{-1} M [\dfrac{C}{|det(C)|^{1/n}}]^{-T}])^T = I_n$,

i.e. 
$[\dfrac{C}{|det(C)|^{1/n}}]^{-1} M [\dfrac{C}{|det(C)|^{1/n}}]^{-T} \in \mathcal{O}_n$ and we get (a) $\Rightarrow$ (d).

For the converse, assume that $M=KUK^T$ with $K \in GL_n$, $U \in \mathcal{O}_n$ and $det(M) = \pm 1$. By computing the determinants we obtain: $det(U) = det (M)$ and $det(KK^T)=1$. Hence: $det(KK^T)^{2/n} M (KK^T)^{-1} M^T = M K^{-T}K^{-1} M^T = KK^T$ (after replacing $M$ with $KUK^T$), i.e. $KK^T$ is a fixed point of $\Gamma_M \circ j \circ \delta$.
\end{proof}

\begin{prop}\label{Fix-Gamma-j-chi}
Let $M \in GL_n$ and assume that $\Gamma_M \circ j \circ \delta$ is elliptic.  Then

a) $\emptyset \ne Fix(\Gamma_M \circ j) \subseteq SL\mathcal{P}_n$;

b) $Fix(\Gamma_M \circ j \circ \delta) = \mathbb{R}^+ \cdot \, Fix(\Gamma_M \circ j):= \{tP : t > 0, P \in Fix(\Gamma_M \circ j)\}$;

c) $(Fix(\Gamma_M \circ j \circ \delta), g)$ a closed simply connected totally geodesic symmetric Riemannian submanifold of $(\mathcal{P}_n, g)$, isometric to the Riemannian product  

$(Fix(\Gamma_M \circ j), g) \times (\mathbb{R}, \varepsilon)$ ($\varepsilon$ is the ordinary euclidean metric) and hence, with the same notations as in Proposition \ref{Gamma-j-Riem-prod}, it is isometric to the Riemannian product

$(\mathcal{P}_{p+q} \cap S\mathcal{O}_0(p,q), g) \times \, \prod_{j=1}^h (\mathcal{H}_{\mu_j + \nu_j} \cap U(\mu_j, \nu_j), 2\gamma) \, \times (\mathcal{P}_{2k} \cap Sp_{2k}, g) \times (\mathbb{R}, \varepsilon)$ 

and its dimension is $pq + 2\sum_{j=1}^h \mu_j \nu_j + k(k+1) +1$.
\end{prop}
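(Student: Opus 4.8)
The plan is to deduce everything from the already-established description of $Fix(\Gamma_M \circ j)$ (Proposition \ref{Gamma-j-Riem-prod}) together with the relation between $\Gamma_M \circ j \circ \delta$ and $\Gamma_M \circ j$. For part (a), by Proposition \ref{class-ellitt-Gamma-j-chi} the ellipticity of $\Gamma_M \circ j \circ \delta$ forces $\Gamma_M \circ j$ to be elliptic and $\det(M) = \pm 1$; hence $Fix(\Gamma_M \circ j) \ne \emptyset$. By Remark \ref{det-P-j-chi}, every $P \in Fix(\Gamma_M \circ j)$ satisfies $\det(P) = |\det(M)| = 1$, so $Fix(\Gamma_M \circ j) \subseteq SL\mathcal{P}_n$.

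For part (b), I would argue by a direct double inclusion. If $P \in Fix(\Gamma_M \circ j)$ and $t > 0$, then $(\Gamma_M \circ j \circ \delta)(tP) = \det(tP)^{2/n} M (tP)^{-1} M^T = t^2 \det(P)^{2/n} \, t^{-1} M P^{-1} M^T = t \, (\Gamma_M \circ j)(P) = tP$, using $\det(P)=1$; this gives $\mathbb{R}^+ \cdot Fix(\Gamma_M \circ j) \subseteq Fix(\Gamma_M \circ j \circ \delta)$. Conversely, if $Q \in Fix(\Gamma_M \circ j \circ \delta)$, then $\det(Q)^{2/n} M Q^{-1} M^T = Q$; set $t := \det(Q)^{1/n} > 0$ and $P := t^{-1} Q$, so $\det(P) = 1$. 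A short computation then shows $M P^{-1} M^T = t^{-1} \det(Q)^{2/n} M Q^{-1} M^T = t^{-1} Q = P$ (since $\det(Q)^{2/n} = t^2$), so $P \in Fix(\Gamma_M \circ j)$ and $Q = tP \in \mathbb{R}^+ \cdot Fix(\Gamma_M \circ j)$. The main point to check carefully is the bookkeeping of the determinant factors, but there is no real obstacle.

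For part (c), note that the multiplicative decomposition $\mathcal{P}_n \cong SL\mathcal{P}_n \times \mathbb{R}^+$, $P \mapsto (\det(P)^{-1/n} P, \det(P)^{1/n})$, together with the classical fact (cf.\ Remark \ref{DeRham-casi}) that $(\mathcal{P}_n, g)$ is the Riemannian product of $(SL\mathcal{P}_n, g)$ with a line, restricts to an isometry from the cone $\mathbb{R}^+ \cdot Fix(\Gamma_M \circ j)$ onto $Fix(\Gamma_M \circ j) \times (\mathbb{R}, \varepsilon)$, the $\mathbb{R}^+$-direction being orthogonal to $SL\mathcal{P}_n \supseteq Fix(\Gamma_M \circ j)$ along the geodesic $\mathcal{R}$ and hence along every dilate of it. More precisely, one checks that the map $tP \mapsto (P, \ln t)$ (with $P \in Fix(\Gamma_M\circ j)$, $t>0$) is a well-defined diffeomorphism by part (b), and that it is an isometry because the radial geodesics $t \mapsto tP$ are unit-speed geodesics of $(\mathcal{P}_n,g)$ orthogonal to the totally geodesic $SL\mathcal{P}_n$. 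Substituting the explicit Riemannian product for $(Fix(\Gamma_M \circ j), g)$ from Proposition \ref{Gamma-j-Riem-prod} gives the displayed product, and adding $1$ to $\dim Fix(\Gamma_M \circ j) = pq + 2\sum_{j=1}^h \mu_j\nu_j + k(k+1)$ gives the stated dimension. That $(Fix(\Gamma_M \circ j \circ \delta), g)$ is closed, simply connected, totally geodesic, symmetric and Hadamard is immediate from Theorem \ref{Fix-Riem-manifold}, since $\Gamma_M \circ j \circ \delta$ is an elliptic isometry. The only mildly delicate step is verifying that the product splitting is genuinely Riemannian (orthogonality of the two factors), which follows from the orthogonality of $\mathcal{R}$ to $SL\mathcal{P}_n$ recalled in Remark \ref{int-geom-isom} and the fact that scaling $P \mapsto tP$ is an isometry composed with a translation along $\mathcal{R}$ in the product structure of $(\mathcal{P}_n, g)$.
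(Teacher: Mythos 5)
Your proposal is correct and follows essentially the same route as the paper: parts (a) and (b) are the paper's own arguments (Proposition \ref{class-ellitt-Gamma-j-chi} plus Remark \ref{det-P-j-chi}, then the double inclusion with the determinant bookkeeping), and part (c) splits off the radial $\mathbb{R}^+$-direction by restricting the product decomposition $\mathcal{P}_n \cong SL\mathcal{P}_n \times \mathbb{R}$, exactly as the paper does by restricting the isometry $P \mapsto \bigl(P\det(P)^{-1/n}, \ln\det(P)/\sqrt{n}\bigr)$ from \cite[Proof of Prop.\,2.7]{DoPe2019}. The only inaccuracy is the normalization in (c): since $g_{tP}(P,P)=n/t^2$, the radial curve $s \mapsto e^{s}P$ has speed $\sqrt{n}$, not $1$, so your map $tP \mapsto (P,\ln t)$ is not itself an isometry onto $(Fix(\Gamma_M \circ j), g)\times(\mathbb{R},\varepsilon)$; the correct map is $tP \mapsto (P,\sqrt{n}\,\ln t)$, which is the paper's map above restricted to the fixed locus — a rescaling that does not affect the stated isometry type, so the conclusion stands.
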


\begin{proof}

a) It follows from Proposition \ref{class-ellitt-Gamma-j-chi} and from Remark \ref{det-P-j-chi}.

b) If $P \in Fix(\Gamma_M \circ j)$, then $MP^{-1}M^T= P $ and $det(P )= 1$ from (a).

Hence, for every $t \in \mathbb{R}^+$,
$det(tP)^{2/n} M(tP)^{-1}M^T = tMP^{-1}M^T = tP$, 

i.e. $tP \in Fix(\Gamma_M \circ j \circ \delta)$.

For the other inclusion, it suffices to note that, if $P \in Fix(\Gamma_M \circ j \circ \delta)$, then $\dfrac{P}{det(P)^{1/n}} \in Fix(\Gamma_M \circ j)$.

Indeed we have: $M(\dfrac{P}{det(P)^{1/n}})^{-1} M^T = det(P)^{1/n} MP^{-1}M^T = \dfrac{P}{det(P)^{1/n}}$.

c) The mapping $(\mathcal{P}_n,g) \to (SL\mathcal{P}_n, g) \times (\mathbb{R}, \varepsilon)$, $P \mapsto (\dfrac{P}{det(P)^{1/n}}, \dfrac{\ln(det(P))}{\sqrt{n}})$, is an isometry, as proved in \cite[Proof of Prop.\,2.7]{DoPe2019}. By part (b), the restriction of this mapping to $Fix(\Gamma_M \circ j \circ \delta)$ is an isometry from $(Fix(\Gamma_M \circ j \circ \delta), g)$ onto $(Fix(\Gamma_M \circ j), g) \times (\mathbb{R}, \varepsilon)$. 
\end{proof}

\begin{prop}\label{DeRham-Gamma-j-delta}
Let $M \in GL_n$, assume that $\Gamma_M \circ j \circ \delta$ is elliptic, let $U$ an orthogonal matrix, $\mathbb{R}$-congruent to $M$, and let $p, q, k, h, \mu_j, \nu_j  \ge 0$ as in Proposition \ref{DeRham-Gamma-j}.

Then, up to isometries,  the De Rham factors of $\left( Fix(\Gamma_M \circ j \circ \delta), g \right)$ are:

a) $\mathbb{R}^2$, if $p=q=1$;

b) $\mathbb{R}$, if $p \ne 1$ or $q \ne 1$;

c) $ S\mathcal{O}_0(p, q)/ ( S\mathcal{O}_p \oplus S\mathcal{O}_q )$, if $p, q \ge 1$ and $p+q \ge 3$;

d) $Sp_{2k}/\rho(U_{k})$, if $k \ge 1$;

e) $SU(\mu_j, \nu_j) / S(U_{\mu_j}  \oplus  U_{\nu_j})$, if $h \ge 1$ for every $j = 1, \cdots , h$ such that $\mu_j, \nu_j \ge 1$.
\end{prop}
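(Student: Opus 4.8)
The plan is to derive the De Rham decomposition of $(Fix(\Gamma_M \circ j \circ \delta), g)$ directly from the explicit Riemannian product description already available in Proposition \ref{Fix-Gamma-j-chi}(c), namely
\[
(Fix(\Gamma_M \circ j), g) \times (\mathbb{R}, \varepsilon),
\]
combined with the list of De Rham factors of $(Fix(\Gamma_M \circ j), g)$ provided by Proposition \ref{DeRham-Gamma-j}. The key point is that the De Rham decomposition of a Riemannian product is obtained by concatenating the De Rham decompositions of the factors and then grouping all flat factors into a single Euclidean factor, so I only need to track how the extra $\mathbb{R}$ factor merges with the Euclidean factor coming from $(Fix(\Gamma_M \circ j), g)$.

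First I would recall from Proposition \ref{DeRham-Gamma-j} that the flat part of $(Fix(\Gamma_M \circ j), g)$ is one copy of $\mathbb{R}$ exactly when $p = q = 1$ (case (a) there), and is trivial otherwise, while the non-flat irreducible factors are $S\mathcal{O}_0(p,q)/(S\mathcal{O}_p \oplus S\mathcal{O}_q)$ when $p,q \ge 1$ and $p+q \ge 3$, the factor $Sp_{2k}/\rho(U_k)$ when $k \ge 1$, and the factors $SU(\mu_j,\nu_j)/S(U_{\mu_j}\oplus U_{\nu_j})$ for each $j$ with $\mu_j,\nu_j \ge 1$. Next I would observe that tensoring with $(\mathbb{R},\varepsilon)$ leaves all the non-flat irreducible factors untouched (so items (c), (d), (e) of the present statement are literally items (b), (c), (d) of Proposition \ref{DeRham-Gamma-j}), and adds one extra Euclidean dimension to the flat factor. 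Hence if $p = q = 1$ the flat factor becomes $\mathbb{R}^{1+1} = \mathbb{R}^2$, giving item (a), and otherwise the flat factor is $\mathbb{R}^{0+1} = \mathbb{R}$, giving item (b).

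The only subtlety, which I would address explicitly, is that the constant appearing in the isometry of Proposition \ref{Fix-Gamma-j-chi}(c) (the factor $1/\sqrt n$ in $\ln(\det P)/\sqrt n$) is irrelevant for the De Rham decomposition, since any flat one-dimensional Riemannian manifold that is complete and simply connected is isometric to the standard $\mathbb{R}$; thus the metric $\varepsilon$ on the $\mathbb{R}$ factor may be taken to be the ordinary Euclidean metric without loss of generality, exactly as stated. I would also note that $p,q,k,h$ are not all zero (inherited from Proposition \ref{DeRham-Gamma-j}), so the list of factors is never empty, and that the grouping of flat factors into a single Euclidean one is precisely the uniqueness statement for the De Rham decomposition recalled in the Remark-Definition following Theorem \ref{Fix-Riem-manifold}.

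I do not anticipate a genuine obstacle here: the entire content is bookkeeping on top of Propositions \ref{Fix-Gamma-j-chi} and \ref{DeRham-Gamma-j}. If anything, the one place requiring a half-sentence of care is the case analysis $p=q=1$ versus ``$p\ne 1$ or $q\ne 1$'': one must check these two conditions are exhaustive (they are, being logical negations of each other) and that in the second case the flat part of $(Fix(\Gamma_M\circ j),g)$ is indeed trivial, which is exactly what Proposition \ref{DeRham-Gamma-j} asserts since its item (a) only contributes when $p=q=1$. Once that is in place, the proof is a one-line appeal to the product structure.
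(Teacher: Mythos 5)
Your proposal is correct and follows essentially the same route as the paper, whose proof is a direct appeal to Propositions \ref{Fix-Gamma-j-chi} and \ref{DeRham-Gamma-j}: you take the product decomposition $(Fix(\Gamma_M \circ j), g) \times (\mathbb{R}, \varepsilon)$, keep the non-flat irreducible factors, and merge the extra $\mathbb{R}$ into the Euclidean factor. The bookkeeping you spell out (the $p=q=1$ versus ``$p\ne 1$ or $q\ne 1$'' dichotomy and the irrelevance of the $1/\sqrt{n}$ scaling) is exactly what the paper leaves implicit.
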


\begin{proof}
It follows directly from Propositions \ref{Fix-Gamma-j-chi} and \ref{DeRham-Gamma-j}.
\end{proof}

\bibliographystyle{alpha}
\bibliography{DolPerArx}

\end{document}